\newcommand{\N}{\mathbb{N}}
\newcommand{\R}{\mathbb{R}}
\newtheorem{remark}[theorem]{Remark}
\newtheorem{algorithm}{Algorithm}
\title{Efficient inclusion of total variation type priors in quantitative photoacoustic tomography}
\author{A. Hannukainen\footnotemark[2]
\and N. Hyv\"onen\footnotemark[2]
\and H. Majander\footnotemark[3]
\and T. Tarvainen\footnotemark[4]
}
\begin{document}
\maketitle

\renewcommand{\thefootnote}{\fnsymbol{footnote}}

\footnotetext[2]{Aalto University, Department of Mathematics and Systems Analysis, P.O. Box 11100, FI-00076 Aalto, Finland (antti.hannukainen@aalto.fi, nuutti.hyvonen@aalto.fi). The work of NH was supported by the Academy of Finland (decision 267789).}

\footnotetext[3]{Aalto University, Department of Mathematics and Systems Analysis, P.O. Box 11100, FI-00076 Aalto, Finland and \'Ecole Polytechnique, Centre de Math\'ematiques Appliqu\'es, Route de Saclay, 91128 Palaiseau Cedex, France (helle.majander@aalto.fi). The work of this author was supported by the Academy of Finland (decision 267789).}

\footnotetext[4]{University of Eastern Finland, Department of Applied Physics, FI-70211 Kuopio, Finland and University College London, Department of Computer Science, WC1E 6BT London, UK, 
(tanja.tarvainen@uef.fi). The work of this author was supported by the Academy of Finland (decisions 136220, 272803 and the Finnish Centre of Excellence in Inverse Problems Research 250215).}

\begin{abstract}
Quantitative photoacoustic tomography is an emerging imaging technique aimed at estimating the distribution of optical parameters inside tissues from photoacoustic images, which are formed by combining optical information and ultrasonic propagation. 
This optical parameter estimation problem is ill-posed and needs to be approached within the framework of inverse problems. 
Photoacoustic images are three-dimensional and high-resolution. Furthermore, high-resolution reconstructions of the optical parameters are targeted. 
Therefore, in order to provide a practical method for quantitative photoacoustic tomography,  the inversion algorithm needs to be able to perform successfully with problems of prominent size.
In this work, an efficient approach for the inverse problem of quantitative photoacoustic tomography is proposed, assuming an edge-preferring prior for the optical parameters. 
The method is based on iteratively combining priorconditioned LSQR with a lagged diffusivity step and a linearisation of the measurement model, with the needed multiplications by Jacobians performed in a matrix-free manner.
The algorithm is tested with three-dimensional numerical simulations.
The results show that the approach can be used to produce accurate and high quality estimates of absorption and diffusion in complex three-dimensional geometries with moderate computation time and cost.
\end{abstract}

\renewcommand{\thefootnote}{\arabic{footnote}}

\begin{keywords}
quantitative photoacoustic tomography, priorconditioning, Perona--Malik, LSQR, matrix-free implementation, total variation
\end{keywords}

\begin{AMS}
65N21, 35R30, 35Q60
\end{AMS}

\pagestyle{myheadings}
\thispagestyle{plain}
\markboth{A. HANNUKAINEN, N. HYV\"ONEN, H. MAJANDER, AND T. TARVAINEN}{TOTAL VARIATION TYPE PRIORS IN QPAT}

\section{Introduction}
\label{sec:introduction}

In {\em photoacoustic tomography} (PAT), high-contrast, high-res\-o\-lu\-tion images of biological tissue are produced by utilising the photoacoustic effect caused by an externally introduced light pulse. 
Absorption of the light pulse in a target generates an initial acoustic pressure distribution that is proportional to the absorbed energy density of the light. 
Due to the elastic nature of tissue, the initial pressure distribution propagates as an ultrasonic wave and can be measured on the surface of the tissue. 
These measurements are then used to reconstruct the initial pressure and to form images of the target.   
Photoacoustic imaging combines the benefits of optical contrast and ultrasound propagation. 
The optical methods provide information about the distribution of chromophores which are light absorbing molecules within the tissue. 
The chromophores of interest are~e.g.~haemoglobin, melanin and various contrast agents.  
The ultrasonic waves carry this optical information directly to the surface with minimal scattering, thus retaining accurate spatial information as well.
PAT has successfully been applied to the visualisation of different structures in biological tissues such as human blood vessels, microvasculature of tumours and cerebral cortex in small animals. 
For more information about PAT, see~e.g.~\cite{xu2006, li2009, wang2009, beard2011} and the references therein.

{\em Quantitative photoacoustic tomography} (QPAT) is a technique aimed at estimating the concentrations of the chromophores \cite{cox2012a}.  
The inverse problem associated with QPAT is two-fold.
First, the initial acoustic pressure distribution is estimated from the measured acoustic waves. 
This is an inverse initial value problem of acoustics and it has been studied extensively; see~e.g.~\cite{xu2006, kuchment2008, wang2009} and the references therein.  
The second stage of QPAT consists of the optical inverse problem of determining the concentrations of chromophores.   
These concentrations can be reconstructed either by directly estimating them from photoacoustic images obtained at various
wavelengths \cite{cox2009,laufer2010,bal2012,pulkkinen2014} or by first recovering the absorption coefficients at different wavelengths and then calculating the concentrations based on the absorption spectra \cite{razansky2009,cox2009,bal2012}. 
In order to obtain accurate estimates, scattering effects need to
be taken into account \cite{bal2011a,cox2012a,tarvainen2012,pulkkinen2014b}.  
As an alternative to the two-step approach, the estimation of the optical parameters directly from photoacoustic time-series has also been considered \cite{shao2012,song2014,haltmeier2015,gao2015,ding2015,pulkkinen2015b}.

In this work, the optical inverse problem of QPAT is studied assuming the corresponding acoustic inverse problem has already been solved. 
The estimation of absorption and diffusion at a single wavelength of light is considered,
but the extension to multiple wavelengths is straightforward.
Furthermore, it is assumed that the photoacoustic efficiency, which connects the acoustic pressure with the absorbed optical energy density and can be identified with the Gr{\"u}neisen parameter for an absorbing fluid, is known.
For discussion about the estimation of the Gr{\"u}neisen parameter simultaneously with the optical
parameters, see~e.g.~\cite{shao2011,bal2012,naetar2014,pulkkinen2014,alberti2015}. In the optical inverse problem of QPAT, two models of light propagation have been used: the {\em radiative transfer equation} (RTE) \cite{tarvainen2012,saratoon2013,mamonov2014,pulkkinen2015,haltmeier2015} and its {\em diffusion approximation} (DA) \cite{gao2010,bal2011a,shao2011,zemp2010,tarvainen2012,tarvainen2013,naetar2014,pulkkinen2014b,Ren13}.
The forward model used here is based on the DA, although our approach could also be implemented with the RTE.

The optical inverse problem of QPAT is nonlinear and ill-posed. 
In order to overcome the ill-posedness, regularisation or Bayesian methods need to be used \cite{kaipio05}. 
In this work, a Bayesian approach with a Gaussian model for the measurement noise and an edge-preferring prior for the to-be-estimated optical parameters are employed.
To be more precise, we consider total variation \cite{Rudin92} type priors (particularly Perona--Malik~\cite{Perona90}), which are edge-preserving and support piecewise constant images which consist of a few homogeneous levels.
Total variation priors/regularisation have previously been utilised in QPAT in~e.g.~\cite{Gao12,gao2010,bal2011a,tarvainen2012} and a Mumford--Shah type approach in~\cite{Beretta15}.

PAT images are {\em three-dimensional} (3D) and high-resolution, and hence they contain a significant amount of data. 
Furthermore, QPAT aims at high-resolution 3D reconstructions of the optical parameters.  
In consequence, a practical inversion method for QPAT must be able to successfully tackle problems with tens, or even hundreds of thousands of data  and unknowns.
In this work, we propose an efficient algorithm for the optical inverse problem of QPAT, capable of handling edge-preferring, total variation type priors for the optical parameters. The approach is based on iteratively combining a lagged diffusivity step and a linearisation of the measurement model of QPAT with priorconditioned LSQR. The algorithm is a modified version of the one introduced for inverse elliptic boundary value problems in~\cite{Hannukainen15,Harhanen15}; see also \cite{Arridge14} for the original ideas behind the technique. In particular, to facilitate the treatment of far greater amounts of data compared to~\cite{Hannukainen15,Harhanen15}, we implement a matrix-free technique for multiplying vectors by the Jacobian of the measurement map, rendering it possible to painlessly handle (full) Jacobians with, say, $10^5$ rows and columns.
This is one of the few studies where QPAT is investigated in 3D; for previous works see \cite{saratoon2013b,naetar2014,pulkkinen2015b}. In particular, \cite{Gao12,gao2010} studied a gradient-based (bound-constrained) split Bregman method for handling TV regularization in 3D QPAT.

The structure of the paper is as follows. 
The optical measurement model of QPAT is described in Section~\ref{sec:MM}. 
The Bayesian framework is introduced in Section~\ref{sec:Bayes} and the algorithm itself in Section~\ref{sec:algo}.
Section~\ref{sec:numerics} tests the approach with 3D numerical simulations. The conclusions are drawn in Section~\ref{sec:conclusion}.

\section{Measurement model}
\label{sec:MM}
We assume the measurements are static in time and model the examined physical body as a bounded domain $\Omega \subset \R^3$ with a connected complement and a Lipschitz boundary.  The domain $\Omega$ is assumed to be isotropic and the associated diffusion and absorption coefficients are denoted by $\kappa, \mu \in L^\infty_+(\Omega)$, respectively, with the definition
$$
L^\infty_+(\Omega) \, = \, \left\{ v \in L^\infty(\Omega) \, | \, {\rm ess}\inf v > 0 \right\}
$$
that takes into account the positivity of these physical quantities.
In this work the DA of the RTE is used as the model for light transport. 
Compared to the full RTE, the DA generally allows faster reconstruction methods due to its simplicity. 
According to the DA,
the 
photon fluence
$\varphi \in H^1(\Omega)$ corresponding to the photon flux $\Phi \in L^2(\partial \Omega)$ through $\partial \Omega$ satisfies the elliptic Robin boundary value problem
\begin{equation}
\label{fwmod} \left\{
\begin{array}{ll}
-\nabla \cdot (\kappa \nabla \varphi) + \mu \varphi = 0 \qquad  &\text{in} \ \Omega, \\[2mm] 
{\displaystyle \frac{1}{4}} \varphi + {\displaystyle \frac{1}{2}} \nu \cdot \kappa\nabla\varphi = \Phi \qquad &\text{on} \ \partial \Omega,
\end{array} \right.
\end{equation}
where $\nu: \partial \Omega \to \R^3$ is the exterior unit normal of $\partial \Omega$ (cf.~\cite{Grisvard85}). We assume the available photoacoustic measurements are noisy samples of the absorbed energy density $H \in L^2(\Omega)$ defined via
\begin{equation}
\label{measurement}
H = \mu \, \varphi \, ,
\end{equation}
which obviously depends on the optical parameters $\kappa$ and $\mu$.

\begin{lemma}
\label{derivative}
The Fr\'echet derivative of the measurement map
$$
\left[ L^\infty_+(\Omega) \right]^2 \ni (\kappa, \mu) \mapsto H \in L^2(\Omega)
$$
at $(\kappa, \mu) \in [ L^\infty_+(\Omega) ]^2$ is given by the linear mapping
$$
\left[ L^\infty(\Omega) \right]^2 \ni (\vartheta, \theta) \mapsto
\mu \, \varphi' + \theta \, \varphi \in L^2(\Omega)
$$
where $\varphi = \varphi(\kappa,\mu) \in H^1(\Omega)$ is the unique solution of \eqref{fwmod} and $\varphi' = (\varphi'(\kappa,\mu))(\vartheta,\theta) \in H^1(\Omega)$ that of the variational problem
\begin{align}
\label{varphider}
\int_{\Omega}\left( \kappa \nabla \varphi' \cdot \nabla v + \mu \varphi' v \right)& dx  + %2 \gamma
{\displaystyle \frac{1}{2}} \int_{\partial \Omega}   \varphi' v \, d S \nonumber \\[1mm] = & \, - \int_{\Omega} \vartheta \, \nabla \varphi \cdot \nabla v \, dx - \int_{\Omega} \theta \varphi v \, dx
\end{align}
for  all $v \in H^1(\Omega)$.
\end{lemma}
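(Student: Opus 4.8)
The plan is to recast \eqref{fwmod} in weak form, differentiate the parametrised variational identity formally, and then justify the formal computation with a quadratic remainder estimate. Multiplying the PDE in \eqref{fwmod} by a test function $v \in H^1(\Omega)$, integrating by parts, and using the Robin condition to eliminate the normal flux shows that $\varphi = \varphi(\kappa,\mu)$ satisfies
\[
B_{\kappa,\mu}(\varphi, v) \, := \, \int_\Omega \left( \kappa \nabla\varphi \cdot \nabla v + \mu \varphi v \right) dx + \frac{1}{2}\int_{\partial\Omega}\varphi v \, dS \, = \, 2\int_{\partial\Omega}\Phi v \, dS
\]
for all $v \in H^1(\Omega)$. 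Because $\kappa$ and $\mu$ have positive essential infima, the symmetric bilinear form $B_{\kappa,\mu}$ is coercive on $H^1(\Omega)$, with coercivity constant at least $\min\{\mathrm{ess}\inf\kappa,\,\mathrm{ess}\inf\mu\}$, and it is bounded via the trace theorem; hence Lax--Milgram yields a unique $\varphi$. The very same form appears on the left-hand side of \eqref{varphider}, whose right-hand side is a bounded linear functional of $v$ whenever $\vartheta,\theta\in L^\infty(\Omega)$ and $\varphi\in H^1(\Omega)$, so \eqref{varphider} likewise admits a unique solution $\varphi'\in H^1(\Omega)$, and $\mu\varphi'+\theta\varphi\in L^2(\Omega)$ is a well-defined bounded linear image of $(\vartheta,\theta)$.

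Next I would establish Lipschitz continuity of the solution map. Writing $\varphi_\delta = \varphi(\kappa+\vartheta,\mu+\theta)$ and subtracting the weak form at $(\kappa,\mu)$ from that at $(\kappa+\vartheta,\mu+\theta)$, while noting that the boundary term carries no dependence on the coefficients, gives
\[
B_{\kappa,\mu}(\varphi_\delta-\varphi,v) \, = \, -\int_\Omega \vartheta\,\nabla\varphi_\delta\cdot\nabla v \, dx - \int_\Omega \theta\,\varphi_\delta\,v \, dx .
\]
For $\norm{(\vartheta,\theta)}$ small enough the perturbed coefficients retain positive essential infima, so a test with $v=\varphi_\delta$ together with the trace bound on $\Phi$ shows that $\varphi_\delta$ is uniformly bounded in $H^1(\Omega)$. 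Testing the displayed identity with $v=\varphi_\delta-\varphi$ and invoking coercivity then yields $\norm{\varphi_\delta-\varphi}_{H^1(\Omega)} = O(\norm{(\vartheta,\theta)}_{[L^\infty(\Omega)]^2})$.

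The core step is the quadratic bound on the remainder $w := \varphi_\delta-\varphi-\varphi'$. Subtracting \eqref{varphider} from the difference identity above shows
\[
B_{\kappa,\mu}(w,v) \, = \, -\int_\Omega \vartheta\,\nabla(\varphi_\delta-\varphi)\cdot\nabla v \, dx - \int_\Omega \theta\,(\varphi_\delta-\varphi)\,v \, dx ,
\]
whose right-hand side is now driven by the \emph{difference} $\varphi_\delta-\varphi$ rather than by $\varphi_\delta$ itself. Testing with $v=w$, using coercivity, and inserting the Lipschitz estimate from the previous paragraph gives $\norm{w}_{H^1(\Omega)} = O(\norm{(\vartheta,\theta)}_{[L^\infty(\Omega)]^2}^2)$. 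This is the heart of the argument and the step I expect to require the most care, since it is precisely where the second-order nature of the error enters; the trick that makes it work is rewriting everything through the single unperturbed form $B_{\kappa,\mu}$.

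Finally I would transfer the estimate from $\varphi$ to $H = \mu\varphi$. Expanding the increment gives
\[
H(\kappa+\vartheta,\mu+\theta) - H(\kappa,\mu) - (\mu\varphi'+\theta\varphi) \, = \, \mu\,w + \theta\,(\varphi_\delta-\varphi) ,
\]
and bounding in $L^2(\Omega)$ by $\norm{\mu}_{L^\infty(\Omega)}\norm{w}_{L^2(\Omega)} + \norm{\theta}_{L^\infty(\Omega)}\norm{\varphi_\delta-\varphi}_{L^2(\Omega)}$, both terms are $O(\norm{(\vartheta,\theta)}_{[L^\infty(\Omega)]^2}^2)$ by the quadratic and Lipschitz estimates, respectively. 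Hence the $L^2$-remainder is $o(\norm{(\vartheta,\theta)}_{[L^\infty(\Omega)]^2})$, which is exactly the asserted Fr\'echet differentiability with derivative $(\vartheta,\theta)\mapsto \mu\varphi'+\theta\varphi$.
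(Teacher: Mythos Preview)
Your proof is correct and self-contained: the weak-form derivation, the Lipschitz estimate from the difference identity, the quadratic bound on $w$ obtained by rewriting everything against the unperturbed form $B_{\kappa,\mu}$, and the product expansion for the $H$-remainder all check out. The paper takes a different, much more abstract route: it simply cites a known result (e.g.\ \cite{Dierkes02}) for the Fr\'echet differentiability of $(\kappa,\mu)\mapsto\varphi$ with derivative $(\vartheta,\theta)\mapsto\varphi'$, observes that the bilinear multiplication $L^\infty(\Omega)\times H^1(\Omega)\ni(\lambda,\psi)\mapsto\lambda\psi\in L^2(\Omega)$ is continuous, and then invokes the product and chain rules for Banach spaces to pass from $\varphi$ to $H=\mu\varphi$. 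Your argument is thus a hands-on unpacking of exactly those two ingredients---you effectively reprove the cited differentiability of $\varphi$ and the product-rule remainder estimate in one sweep. The paper's version is shorter and modular; yours is elementary, avoids external references, and makes the quadratic structure of the remainder explicit, which is pedagogically valuable and also gives a slightly sharper statement (an $O(\norm{(\vartheta,\theta)}^2)$ bound rather than merely $o(\norm{(\vartheta,\theta)})$).
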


\begin{proof}
It is well known that the map
$$
\left[L^\infty_+(\Omega)\right]^2 \ni (\kappa, \mu) \mapsto \varphi \in H^1(\Omega)
$$
is Fr\'echet differentiable in $[ L^\infty_+(\Omega) ]^2$ and that the corresponding derivative is given by (cf.~e.g.~\cite{Dierkes02})
$$
\left[ L^\infty(\Omega) \right]^2 \ni (\vartheta, \theta) \mapsto
\varphi' \in H^1(\Omega).
$$
Since the bilinear map 
$$
L^\infty(\Omega) \times H^1(\Omega) \ni (\lambda, \psi) \mapsto \lambda \, \psi \in L^2(\Omega)
$$
is obviously continuous, the claim follows from the product and chain rules for Banach spaces. \qquad
\end{proof}

\section{Bayesian framework and the choice of prior}
\label{sec:Bayes}
Let us next consider the discretised version of 
\eqref{fwmod}--\eqref{measurement}. To begin with, we express the 
diffusion
$\kappa$ and the absorption~$\mu$ as exponential quantities,
\begin{equation}
\label{exponential}
\kappa(\tilde{\kappa}) = \kappa_0 \exp (\tilde{\kappa}) \qquad {\rm and} \qquad \mu(\tilde{\mu}) = \mu_0 \exp (\tilde{\mu}),  
\end{equation}
where
\begin{equation}
\label{discr_params}
\tilde{\kappa} = \sum_{n=1}^N \tilde{\kappa}_n \phi_n \qquad \text{and} \qquad \tilde{\mu} = \sum_{n=1}^N \tilde{\mu}_n \phi_n \, 
\end{equation}
are representations of $\tilde{\kappa}$ and $\tilde{\mu}$ with respect to a piecewise linear {\em finite element} (FE) basis  $\{ \phi_n \}_{n=1}^N \subset H^1(\Omega)$ corresponding to a chosen (tetrahedral) partition of $\Omega$.
If there is no possibility of confusion, we denote by $\tilde{\kappa}$ and $\tilde{\mu}$ both the corresponding vectors of coefficients, $\tilde{\kappa},\tilde{\mu} \in \R^N$, and the functions defined in \eqref{discr_params}. The positive real numbers $\kappa_0, \mu_0 > 0$ in \eqref{exponential} are the constant diffusion and absorption levels that produce an energy density that is the most compatible with the available measurements (cf.~Algorithm~\ref{alg:kokohoska} in Section~\ref{sec:algo}). Notice that we have chosen the logarithms of the diffusion and  absorption as the free variables since this automatically guarantees the positivity of the coefficient functions in \eqref{fwmod}.

We assume the available measurement is 
\begin{equation}
\label{meas_mod}
\chi \, = \, h(\tilde{\kappa}, \tilde{\mu}) + \eta \, \in \R^N ,  
\end{equation}
where $h: \R^N \times \R^N \to \R^N$ are the coefficients in an approximation of 
$H(\tilde{\kappa},\tilde{\mu}) = \mu(\tilde{\mu})  \varphi\big(\kappa(\tilde{\kappa}),\mu(\tilde{\mu})\big)$ with respect to the FE basis $\{ \phi_n \}_{n=1}^N$,\footnote{In our numerical experiments, the actual measurement for given target optical coefficients --- which are {\em not} originally represented in the basis $\{ \phi_n \}_{n=1}^N$ --- are simulated by approximating $H$ in a piecewise linear FEM basis on a denser simulation grid, interpolating the obtained node values onto the mesh corresponding to $\{ \phi_n \}_{n=1}^N$, and adding noise. In particular, the model \eqref{meas_mod} is clearly inexact as it does not account for the interpolation step in the data simulation; this corresponds to avoiding an obvious inverse crime.}
\begin{equation}
\label{projH}
H \, \approx \,  \sum_{n=1}^N h_n \phi_n.
\end{equation}
Moreover, $\eta \in \R^{N}$ is a realisation of a normally distributed random variable with zero mean and a known, symmetric and positive definite covariance matrix $\Gamma \in \R^{N \times N}$. It easily follows that the probability density of the measurements given the parameters is
$$
p(\chi \, | \, \tilde{\kappa}, \tilde{\mu}) \, \propto \,
\exp \! \Big(-\frac{1}{2}\big(\chi - h(\tilde{\kappa}, \tilde{\mu})\big)^{\rm T} \Gamma^{-1} \big(\chi - h(\tilde{\kappa}, \tilde{\mu})\big)\Big),
$$
where the constant of proportionality is independent of $\tilde{\kappa}$ and $\tilde{\mu}$. 

The prior information that the optical properties of the examined body are approximately homogeneous apart from clearly distinguishable inhomogeneities is taken into account by 
equipping the logarithms of the diffusion and absorption with the prior densities
\begin{equation}
\label{priors}
p(\tilde{\kappa}) \, \propto \, \exp \! \big( -a \, R(\tilde{\kappa}) \big) \qquad \text{and} \qquad p(\tilde{\mu}) \propto \exp \! \big( -b \, R(\tilde{\mu}) \big)
\end{equation}
where $a,b > 0$ are free parameters and $R$ is of the form
\begin{equation}
\label{aRRa}
R(u) \, = \, \int_\Omega r\big(|\nabla u(x) | \big) \, dx,
\end{equation}
with $r: \R_+ \to \R_+$ being a suitable, continuously differentiable, monotonically increasing function (cf.~e.g.~\cite{Arridge14}). All numerical examples presented in this work are based on a Perona--Malik prior, i.e., 
\begin{equation}
\label{PM}
r(t) = \frac{1}{2}\, T^2 \log\left( 1 + (t/T)^2 \right),
\end{equation}
where $T > 0$ is a small parameter controlling the size of detectable edges~\cite{Perona90}. However, exactly the same reconstruction algorithm could as well be employed in the context of~e.g.~(smoothened) TV or TV$^q$ regularisation by 
simply 
using another choice of $r$ (cf.~\cite{Arridge14,Harhanen15}).

Under the assumption that $\tilde{\kappa}$ and $\tilde{\mu}$ are independent, the Bayes' formula yields
\begin{align*}
p(\tilde{\kappa}, \tilde{\mu} \, | \, \chi) \, &\propto \, p(\chi \, | \, \tilde{\kappa}, \tilde{\mu}) \, p(\tilde{\kappa})p(\tilde{\mu})  \\[1mm] 
&\propto \, \exp \Big(-\frac{1}{2}\big(\chi - h(\tilde{\kappa}, \tilde{\mu})\big)^{\rm T} \Gamma^{-1} \big(\chi - h(\tilde{\kappa}, \tilde{\mu})\big) - a \, R(\tilde{\kappa}) - b \, R(\tilde{\mu}) \Big), 
\end{align*}
where the constants of proportionality do not depend on $\tilde{\kappa}$ and $\tilde{\mu}$. The algorithm described in the following section seeks an (approximate) maximum estimate for this posterior (MAP estimate), or equivalently tries to approximate the minimiser for the %Tikhonov 
functional
\begin{equation}
\label{Tikhonov}
F(\tilde{\kappa}, \tilde{\mu}) \, := \, \frac{1}{2}\big(\chi - h(\tilde{\kappa}, \tilde{\mu})\big)^{\rm T} \Gamma^{-1} \big(\chi - h(\tilde{\kappa}, \tilde{\mu})\big) + a \, R(\tilde{\kappa}) + b \, R(\tilde{\mu}).
\end{equation}
In the following, we denote $\beta = [\tilde{\kappa}^{\rm T}, \tilde{\mu}^{\rm T}]^{\rm T} \in \R^{2N}$ and write occasionally $h(\beta)$ and $F(\beta)$ to shorten the notation.

\begin{remark}
It is arguably unrealistic to assume that $\tilde{\kappa}$ and $\tilde{\mu}$ are independent because both of them are prone to change at interfaces between different tissues. However, since assuming a dependence between the two parameters would make the setting less general as well as less illposed, we leave considerations on introducing a suitable joint prior for future studies.
\end{remark}

\section{The algorithm}
\label{sec:algo}

In this section we briefly introduce our method for minimising \eqref{Tikhonov}; for more information, see \cite{Arridge14,Hannukainen15,Harhanen15}. The algorithm is only described here for a single boundary photon flux $\Phi$, but it trivially generalises to the case of multiple illuminations.

The basic version of the iterative algorithm starts at the initial guess $\beta^{(0)} = [\tilde{\kappa}_\text{init}^{\rm T}, \tilde{\mu}_\text{init}^{\rm T}]^{\rm T}$, where $\tilde{\kappa}_\text{init} = 0 \in \R^{N}$ corresponds to the constant 
diffusion
$\kappa_0$ (cf.~\eqref{exponential}) and $\tilde{\mu}_\text{init} = \log( \chi/(\varphi_0\mu_0))$, with $\varphi_0 = \varphi(\kappa_0, \mu_0)$ being 
the fluence 
corresponding to the homogeneous parameter values $\kappa_0$ and $\mu_0$. Note that the choice of  $\tilde{\mu}_\text{init}$ is motivated by \eqref{measurement} and \eqref{exponential}.
Linearising $h(\beta)$ around $\beta^{(l)}$ in \eqref{Tikhonov} results in a new %Tikhonov 
functional
\begin{equation}
\label{Tikhonovk}
F^{(l)}(\beta) \, := \, \frac{1}{2}\big(y^{(l)} - J^{(l)}\beta )^{\rm T} \Gamma^{-1} \big(y^{(l)} - J^{(l)} \beta \big) +  a \, R(\tilde{\kappa}) + b \, R(\tilde{\mu}),
\end{equation}
where the matrix $J^{(l)} \in \R^{N \times 2N}$ is the Jacobian of the map $\beta \mapsto h(\beta)$ evaluated at $\beta^{(l)}$ and 
$$
y^{(l)} \, = \, \chi - h(\beta^{(l)}) +  J^{(l)} \beta^{(l)} \, \in \R^{N}.
$$
For a given illumination $\Phi \in L^2(\Omega)$, the coefficients $h(\beta^{(l)})$ in \eqref{projH} are solved from \eqref{fwmod}--\eqref{measurement} with the optical parameters defined via \eqref{exponential} by the {\em finite element method} (FEM) employing the aforementioned piecewise linear basis functions $\{\phi_n\}_{n=1}^N$. The approximation of the elements in $J^{(l)}$ is based on Lemma \ref{derivative} 
and the chain rule; the details are given in Appendix. In particular, the Jacobians are not formed explicitly, but the needed matrix-vector multiplications are performed row by row in a matrix-free manner. This enables painless handling of (full) Jacobians with tens of thousands of rows and columns in our numerical experiments. Details about the utilised FE meshes can be found in Section~\ref{sec:numerics}.

Taking the gradient of \eqref{Tikhonovk}, one obtains the necessary condition for a minimiser, that is, 
\begin{equation}
\label{necessary}
(J^{(l)})^{\rm T} \Gamma^{-1} J^{(l)}  \beta  
+ a
\left[
\begin{array}{c}
\!\! (\nabla R)(\tilde{\kappa}) \!\! \\[1mm]
\!\! \mathrm{0} \!\!
\end{array}
\right]
+ b
\left[
\begin{array}{c}
\!\! \mathrm{0} \!\! \\[1mm]
\!\! (\nabla R)(\tilde{\mu}) \!\! 
\end{array}
\right]
 \, = \,  (J^{(l)})^{\rm T} \Gamma^{-1}y^{(l)},
\end{equation}
where $\mathrm{0} \in \R^N$. The gradient $\nabla R: \R^N \to \R^N$ can be given as \cite{Arridge14}
$$
(\nabla R)(u) = M(u) u,
$$
where $M \in \R^{N\times N}$ is the FEM system matrix in the basis $\{\phi_n\}_{n=1}^N$ for the elliptic partial differential operator
\begin{equation}
\label{diffuope}
-\nabla \cdot c_u \nabla \, 
\end{equation}
with a natural boundary condition on $\partial \Omega$ and the positive-valued diffusion coefficient
$$
c_u: x \mapsto \frac{r'(|\nabla u(x)|)}{|\nabla u(x)|}, \quad \Omega \to \R_+.
$$
It follows easily that $M$ is positive semidefinite with the one-dimensional kernel ${\rm Ker}(M) = {\rm span} \{[1, \dots, 1]^{\rm T} \} $.

We rewrite \eqref{necessary} as
\begin{equation}
\label{necessary2}
\left(
(J^{(l)})^{\rm T} \Gamma^{-1} J^{(l)}
+
\left[
\begin{array}{c c}
\!\! a \, M(\tilde{\kappa}) & \mathrm{0} \!\! \\[1mm]
\!\! \mathrm{0} & b \, M(\tilde{\mu}) \!\!
\end{array}
\right] \right) \beta
 \, = \, (J^{(l)})^{\rm T} \Gamma^{-1}y^{(l)},
\end{equation}
and get rid of its nonlinearity with respect to $\beta = [\tilde{\kappa}^{\rm T}, \tilde{\mu}^{\rm T}]^{\rm T}$ by substituting $M(\tilde{\kappa}^{(l)})$ and $M(\tilde{\mu}^{(l)})$ for $M(\tilde{\kappa})$ and $M(\tilde{\mu})$, respectively. This corresponds to a single lagged diffusivity step \cite{Vogel96}. 
Denoting a Cholesky factor of $\Gamma^{-1}$ by $\Gamma^{-1/2}$ and setting
\begin{equation}
\label{kaavoja}
A = \Gamma^{-1/2}J^{(l)}, \qquad M = \left[
\begin{array}{c c}
\!\! \, M(\tilde{\kappa}^{(l)}) & \mathrm{0} \!\! \\[1mm]
\!\! \mathrm{0} & \frac{b}{a} \, M(\tilde{\mu}^{(l)}) \!\!
\end{array} \right], \qquad \tilde{y} = \Gamma^{-1/2}y^{(l)},
\end{equation}
we finally arrive at the equation
\begin{equation}
\label{priorcond1}
\big( A^{\rm T}A + a M \big) \beta \, = \,  A^{\rm T}\tilde{y}, \qquad a > 0,
\end{equation}
from which $\beta^{(l+1)}$ is to be solved.

Solving \eqref{priorcond1} is equivalent to determining the MAP or {\em conditional mean} (CM) estimate for the linear model 
\begin{equation}
\label{linmod}
A \beta \, = \, \tilde{y}
\end{equation}
assuming a suitable additive Gaussian measurement noise model and for $\beta$ a zero-mean (improper) Gaussian prior with a scaled version of $M$ as the inverse covariance matrix. In our setting, the leading idea of priorconditioning \cite{Calvetti07,Calvetti12,Calvetti05,Calvetti08}\footnote{Priorconditioning is related to transforming a Tikhonov functional into the standard form~\cite{Elden82,Hansen98,Hilgers76}.} is to include the prior information in $M$ directly in the Krylov subspace structure produced by LSQR \cite{Paige82b,Paige82a}. As $M$ is only positive semidefinite with a nontrivial kernel, we approximate it in \eqref{priorcond1} by the positive definite matrix
\begin{equation}
\label{delta}
M_{\delta} \, = \, M + \delta I
\end{equation}
where $\delta > 0$ is a small positive constant, that is, we consider
\begin{equation}
\label{priorcond2}
\big( A^{\rm T}A + a M_\delta \big) \beta \, = \,  A^{\rm T}\tilde{y}, \qquad a > 0,
\end{equation}
in place of \eqref{priorcond1}. In terms of the MAP estimate for \eqref{linmod}, this corresponds to assuming that the inverse covariance matrix of the Gaussian prior is proportional to $M_\delta$ instead of $M$, making the prior proper.

We formally introduce a (Cholesky) factorisation $M_\delta = L^{\rm T} L$, but emphasise that such is not actually needed in the final algorithm because we resort to a version of LSQR that is compatible with symmetric preconditioning \cite{Arridge14}.
Subsequently, \eqref{priorcond2} is multiplied from the left by $(L^{-1})^{\rm T}$ and $a$ is chosen to be zero, which altogether leads to the ill-posed linear equation 
\begin{equation}
\label{priorcond}
(L^{-1})^{\rm T} A^{\rm T} A L^{-1} \tilde{\beta} \, = \,  (L^{-1})^{\rm T} A^{\rm T} \tilde{y}
\end{equation}
where $\tilde{\beta} = L \beta$. 
We solve \eqref{priorcond} by combining LSQR \cite{Arridge14} with an early stopping rule; loosely speaking, the regularisation provided by $a>0$ is replaced with the early stopping of a Krylov subspace method. Each round of LSQR includes one multiplication with $M_{\delta}^{-1}$, which is not overly expensive as $M_{\delta}$ results from a discretisation of an elliptic partial differential equation and is, in particular, sparse. If one starts the LSQR iteration from $\tilde{\beta} = 0$, it is easy to see that the approximate solution is in the range of $M_\delta^{-1}$ regardless of the number of iterations; see \cite{Arridge14,Hannukainen15} for more details. As $M_\delta^{-1}$ is proportional to the (fictive) prior covariance matrix for \eqref{priorcond2}, this means that the prior information in $M_\delta$ --- originating from the previous iterates $\tilde{\kappa}^{(l)}$ and $\tilde{\mu}^{(l)}$ of the outer loop, cf.~\eqref{kaavoja} and \eqref{delta} --- is indeed directly included in the candidate solutions for \eqref{priorcond} produced by the LSQR sequence. 

The LSQR iteration is terminated  when the residual for \eqref{linmod} ceases to decrease substantially: We monitor the relative reduction of the residual over a window of $m_0$ LSQR steps, 
\begin{equation}
\label{stop_crit}
r_m = 1 - \frac{| A \beta_{m} - \tilde{y} |}{| A \beta_{m - m_0} - \tilde{y} |}, \qquad  m > m_0,
\end{equation}
where $\beta_{m}$ is the $m$th element in the LSQR sequence. Once $r_{m} \leq \tau$, for some user specified $\tau > 0$ and $m_0 \in \N$, the latest iterate $\beta_{m}$ is named $\beta^{(l+1)}$ and one proceeds to the next linearisation of the measurement model (cf.~\eqref{Tikhonovk}).

Including an overall stopping criterion based on tracking the decrease of the residual for the original nonlinear measurement model, our reconstruction algorithm is altogether as follows:

\begin{algorithm}\label{alg:kokohoska}
Select $T > 0$, the ratio $b/a$ for \eqref{priors}, the parameters $m_0 \in \N$ and $\tau > 0$ related to \eqref{stop_crit}, and $\delta > 0$. 
Let $\mathbf{1} = [1, \dots, 1]^{\rm T} \in \R^N$ and determine $(\kappa_0, \mu_0)$ as the minimising pair for
\begin{equation*}
\big | \Gamma^{-1/2} \big(\mathcal{\chi} - h(\log(\kappa) \mathbf{1}, \log(\mu)  \mathbf{1})\big) \big|
\end{equation*}
over $(\kappa, \mu) \in \R_+^2$. Solve $\varphi_0 = \varphi(\kappa_0 \mathbf{1}, \mu_0 \mathbf{1})$ and initialise $\tilde{\kappa}_{\rm init} = 0 \in \R^{N}$, $\tilde{\mu}_{\rm init} = \log( \chi/(\varphi_0 \mu_0))$.
Set   $\beta^{(0)} = [\tilde{\kappa}_{\rm init}^{\rm T}, \tilde{\mu}_{\rm init}^{\rm T}]^{\rm T}$ and $l=0$. 
\vspace{3mm}
\begin{enumerate}
\item Let $h = h(\beta^{(l)})$, $J = J^{(l)}$ and $\tilde{y} = \Gamma^{-1/2} \big(\chi - h + J \beta^{(l)}\big)$.
(Recall that $J$ is not built explicitly, but the corresponding matrix-vector multiplications are performed in a matrix-free manner as explained in Appendix.)
\vspace{2mm}
\item Build $M(\tilde{\kappa}^{(l)})$ and $M(\tilde{\mu}^{(l)})$ as finite element discretisations of \eqref{diffuope} and form $M_\delta = L^{\rm T} L$ according to \eqref{kaavoja} and \eqref{delta}.
\vspace{2mm}
\item Apply the LSQR algorithm of \cite{Arridge14} to 
$$
(L^{-1})^{\rm T} J^{\rm T} (\Gamma^{-1/2})^{\rm T} \Gamma^{-1/2} J L^{-1}  \tilde{\beta} \, = \,  (L^{-1})^{\rm T} J^{\rm T} (\Gamma^{-1/2})^{\rm T} \tilde{y}, \qquad \beta = L^{-1} \tilde{\beta},
$$
starting from $\tilde{\beta} = 0$. Terminate the iteration when $r_m \leq \tau$ (cf.~\eqref{stop_crit}) and denote the corresponding solution $\beta^{(l+1)}$.

\vspace{2mm}
\item
If the nonlinear residual corresponding to \eqref{meas_mod} has not decreased, i.e., 
$$
\big | \Gamma^{-1/2} \big(\chi - h(\beta^{(l+1)}) \big) \big| \, \geq \, 
\big | \Gamma^{-1/2} (\chi - h ) \big|,
$$
substitute the previous parameter vector $\beta^{(l)} = [(\tilde{\kappa}^{(l)})^{\rm T}, (\tilde{\mu}^{(l)})^{\rm T}]^{\rm T}$ in \eqref{exponential} and declare the resulting $\kappa$ and $\mu$ the reconstruction. Otherwise, set $l \leftarrow l+1$ and return to step~1.
\end{enumerate}
\vspace{3mm}
\end{algorithm}

The performance of Algorithm~\ref{alg:kokohoska} is relatively insensitive to the choice of the free parameters $T, \delta >0$, which are set to $T = 5 \cdot 10^{-3}$ and $\delta = 10^{-6}$ in our numerical experiments. Moreover, we choose $b/a = 1$, which means that we assume as strong priors for $\tilde{\kappa} = \log(\kappa/\kappa_0)$ and $\tilde{\mu} = \log(\mu/\mu_0)$, cf.~\eqref{priors}. The choice of $m_0$ and $\tau$ is a more delicate issue: via trial and error, we ended up setting $m_0 = 10$ and $\tau = 10^{-2}$, that is, all LSQR iterations in our numerical tests are terminated when the residual for \eqref{linmod} decreases less than one percent over ten steps. These are certainly not optimal values for $m_0$ and $\tau$, but they seem rather generic and result in adequate reconstructions.

With $K$ illuminations $\Phi^{(1)}, \dots, \Phi^{(K)} \in L^2(\partial \Omega)$, the number of measurements increases from $N$ to $K N$, which in turn results in larger Jacobians and slower computations. Be that as it may, Algorithm~\ref{alg:kokohoska} trivially generalises to such a setting: one just needs to stack the individual measurements in a vector of length $KN$, form the corresponding covariance matrix for the measurement noise as a block diagonal matrix of the original covariances, and build the `total Jacobian' by piling the `sub-Jacobians' on top of each other (in a matrix-free manner).  The only essential change concerns the initialisation of the absorption coefficient: we form $\exp (\tilde{\mu}_\text{init}^{(k)})$, $k=1,\dots, K$, separately for each illumination as described in Algorithm~\ref{alg:kokohoska} and subsequently choose the exponential of the actual initial guess $\exp(\tilde{\mu}_\text{init})$ to be their average.

\begin{remark}
The early stopping of LSQR in step 3 of Algorithm~\ref{alg:kokohoska} has previously been successfully implemented in the contexts of electrical impedance and optical tomography by resorting to the Morozov discrepancy principle \cite{Harhanen15,Hannukainen15}. In our setting, this would lead to monitoring when the residual for \eqref{linmod} falls below the (whitened) noise level
$$
\sqrt{\mathbb{E}\big(|\Gamma^{-1/2} \eta|^{2}\big)} = \sqrt{KN} .
$$
However, according to our experience, such an approach does not work, in general, for quantitative photoacoustic tomography based on \emph{simulated} interior measurements without committing an inverse crime or using a dubiously large `fudge factor' to scale the noise level: The discrepancy associated to the interpolation of the target energy density $H$, which is computed on a denser simulation grid (cf.~\eqref{projH}), onto the FEM mesh employed in Algorithm~\ref{alg:kokohoska} is easily of the same order as the error corresponding to the one percent of artificial noise that is added to the data in the numerical experiments of Section~\ref{sec:numerics}. This effect is particularly pronounced if the target absorption and diffusion contain jumps that cause quick deviations in $H$ and therefore also lead to larger interpolation errors. 
\end{remark}

\section{Numerical experiments}
\label{sec:numerics}

\begin{figure}[t]
\begin{center}
\includegraphics[width=0.45\textwidth]{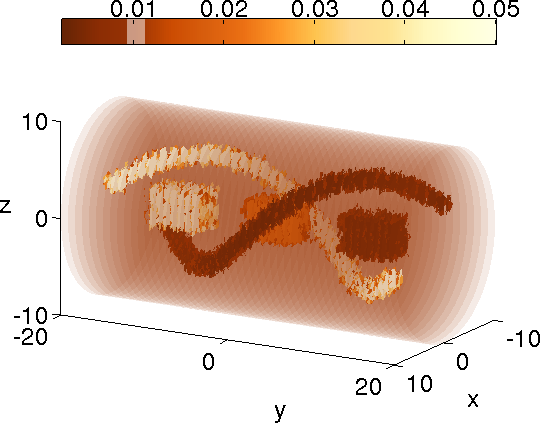} \quad
\includegraphics[width=0.45\textwidth]{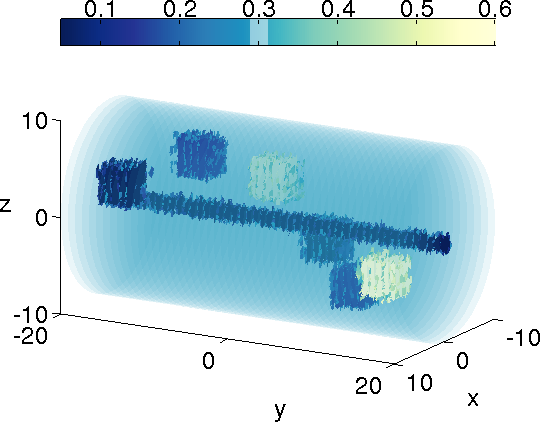} %\\[4mm]
\caption{\label{fig:target} 
Test~1. Left: the target absorption. Right: the target 
diffusion. The values in the intervals $[\mu_\text{bg} \pm 0.001]$ and $[\kappa_\text{bg} \pm 0.01]$ indicated in the colorbars are transparent in the respective images.
}
\end{center}
%\vspace{-2mm}
\end{figure}

To demonstrate the performance of our algorithm, we present two numerical experiments. The first one considers a complicated target and several illuminations. The second test studies how the number and directions of illuminations affect the quality of reconstructions for a somewhat simpler phantom.

\subsection*{Test 1}
We first examine the cylindrical body with constant background properties and embedded inhomogeneities visualised in Figure~\ref{fig:target}. 
The shapes of the target inclusions are described in Table~\ref{tab:target} and the corresponding constant values of the optical parameters are listed in Table~\ref{tab:vals}.
In particular, the target diffusion corresponds to a reduced scattering coefficient that varies in the interval $[0.5, 6.7] \, \text{mm}^{-1}$, which means that our values for the optical parameters could mimic~e.g.~those in breast tissue~\cite{Arridge99}. We illuminate the object in turns with $K = 4$ photon fluxes that penetrate the boundary through rectangular regions located symmetrically around the curved side of the cylinder. The illuminations are homogeneous along the axis of the cylinder, their central polar angles 
are
$\theta_0^{(k)} = (k-1)\pi/2$, $k = 1,\ldots,4$,
and the angular width of each illumination is $\pi/4$. The amplitude of the input flux $\Phi^{(k)}: \partial \Omega \to \R$, $k=1,\dots, 4$, is modeled as
\begin{equation*}
\Phi^{(k)}(\theta) =
\left\{ \begin{array}{ll}
\cos \big( 4(\theta - \theta^{(k)}_0) \big) & \text{if} \ \theta \in \left[\theta^{(k)}_0 - \frac{\pi}{8}, \theta^{(k)}_0 +  \frac{\pi}{8} \right], \\[2mm]
0 & \text{otherwise}
\end{array} \right.
\end{equation*}
where $\theta$ is the polar angle with respect to the axis of the cylindrical domain.

\begin{table}
\begin{center}
\caption{\label{tab:target} 
Test~1. Geometrical specification of the inclusions in the target absorption and diffusion
illustrated in Figure \ref{fig:target}. The unit of length is mm.}
\begin{tabular}{l@{ }l|l@{ }l}
\multicolumn{2}{l|}{Absorption} & \multicolumn{2}{l}{Diffusion} \\ \hline
\multicolumn{2}{l|}{rectangles: size $4 \times 6 \times 4$} &
(1) & cylinder: radius $1$, \\
(1) & center $(0,-11,0)$ & & center $(0,y,0)$, $y \in [-20,20]$ \\
(2) & center $(0,0,0)$ & & \\
(3) & center $(0,11,0)$ & \multicolumn{2}{l}{cubes: size $4^3$,} \\
  &  & \multicolumn{2}{l}{center $(\rho \cos \theta,y,\rho \sin \theta)$, $\rho = 5.5$} \\
\multicolumn{2}{l|}{helical cylinders: radius $1$,} & (2) & $\theta = \pi/6$, $y = -15$ \\
\multicolumn{2}{l|}{center $(\rho \cos \theta,y, \rho \sin \theta)$,} & (3) & $\theta = 3\pi/6$, $y = -9$ \\
\multicolumn{2}{l|}{$\rho = 5.5$, $y \in [-16,16]$} & (4) & $\theta = 5\pi/6$, $y = -3$ \\
(4) & $\theta \in \left[\pi/6, 11 \pi/6 \right]$ & (5) & $\theta = 7\pi/6$, $y = 3$ \\
(5) & $\theta \in \left[7\pi/6, 17 \pi/6 \right]$ & (6) & $\theta = 9\pi/6$, $y = 9$ \\
& & (7) & $\theta = 11\pi/6$, $y = 15$
\end{tabular}
\end{center}
\end{table}

\begin{figure}[t!]
\begin{center}
\includegraphics[width=0.95\textwidth]{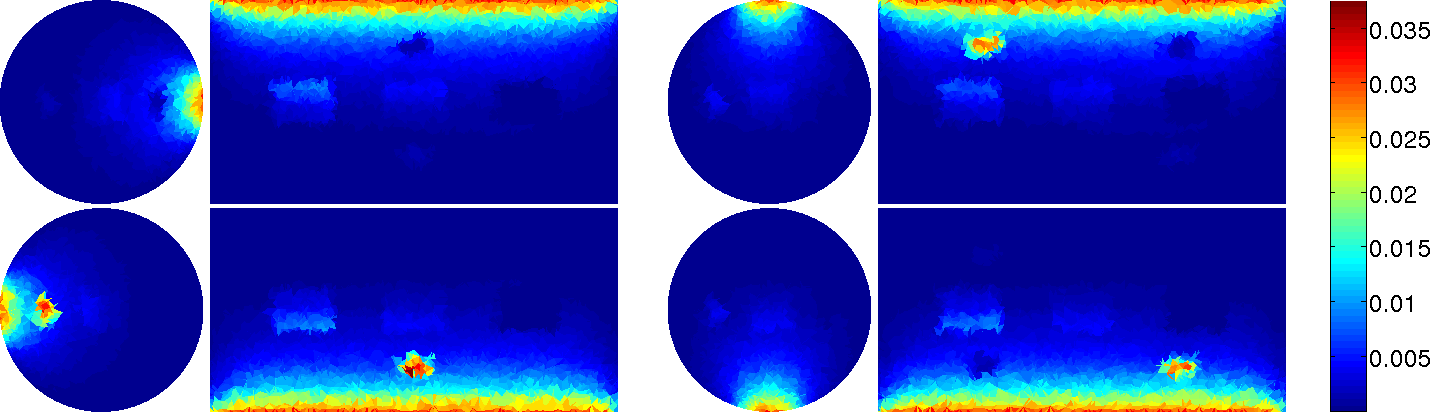}
\caption{\label{fig:meas} 
Test~1. Visualisation of the measurement. Two slices of each $h^{(k)}$, $k = 1, \ldots, 4$, are plotted: one at the position $y=0$, the other at $z = 0$ for $k = 1,3$ (left) and $x = 0$ for $k = 2,4$ (right) depending on the direction of the illumination.
}
\end{center}
%\vspace{-2mm}
\end{figure}

To simulate the data for the numerical experiments, we first solve for each illumination $\Phi = \Phi^{(k)}$ the photon fluence $\varphi^{(k)}$ from the equation \eqref{fwmod} by using a fine FE mesh with $N_\text{f} = 130 091$ nodes and $690 905$ tetrahedrons, and then calculate the corresponding node values of the absorbed energy density $h^{(k)}_{\rm f} \in \R^{N_{\rm f}}$ 
according to \eqref{measurement}. Next, in order to avoid an obvious inverse crime, we project the absorbed energy density onto a coarser FE mesh ($N = 51 794$ nodes and $260 216$ tetrahedrons) which is also used in the reconstruction algorithm. In other words, the (noiseless) measurement, which is illustrated in Figure \ref{fig:meas}, consists of the nodal values
$$
h^{(k)} = P h^{(k)}_{\rm f} \in \R^{N}, \qquad k = 1,\ldots , 4,
$$
where the matrix $P \in \R^{N \times N_\text{f}}$ describes the linear interpolation from the fine FE mesh onto the coarse one.
Finally, we corrupt the measurement with $1\%$ of Gaussian noise, that is, we end up with the data $\chi = \{\chi^{(k)}\}_{k = 1,\ldots,4} \subset \R^N$, where 
$$
\chi^{(k)}_i = h^{(k)}_i + \eta^{(k)}_i
$$
and $\eta^{(k)}_i \sim \mathcal{N}(0, (0.01 \vert h^{(k)}_i \vert)^2)$ for $i = 1, \ldots, N$ and $k=1, \dots, K$.

\begin{figure}[t]
\begin{center}
\includegraphics[width=0.45\textwidth]{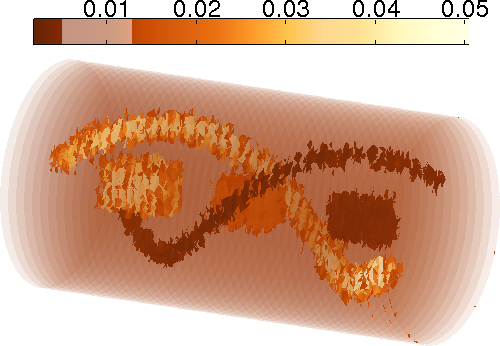} \quad
\includegraphics[width=0.45\textwidth]{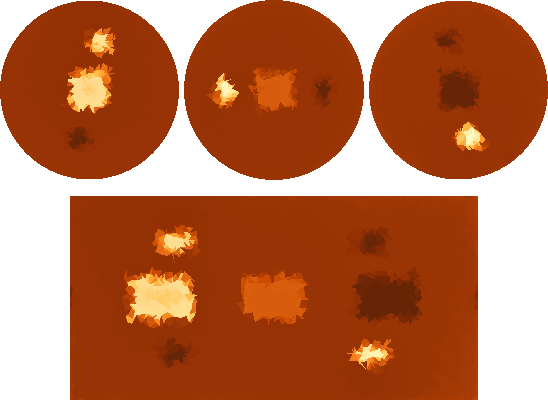} %\\[4mm]
\caption{\label{fig:abs_phase1} 
Test~1. The initial guess $\mu_{\rm init}$ for the absorption. Left: a 3D-visualisation; the values in the interval $[\mu_0 \pm 0.004]$ indicated in the colorbar are transparent. Right: slices at the positions $y = -11, 0, 11$ (top row) and $x = 0$ (bottom row).
}
\end{center}
%\vspace{-2mm}
\end{figure}

In the initialisation phase of Algorithm \ref{alg:kokohoska}, we choose the free parameters as described at the end of Section~\ref{sec:algo},
\begin{equation*}
T = 5 \cdot 10^{-3}, \quad b/a = 1, \quad m_0 = 10, \quad \tau = 10^{-2}, \quad \delta = 10^{-6},
\end{equation*}
and obtain the approximate background values
$\kappa_0 = 0.26 \, \text{mm}$ and $\mu_0 = 0.0087 \, \text{mm}^{-1}$ for the optical parameters. To construct the initial guess for the absorption shown in Figure~\ref{fig:abs_phase1}, we
compute $\varphi^{(k)}_0 = \varphi^{(k)} (\kappa_0 \mathbf{1},\mu_0 \mathbf{1})$ for $k = 1,\ldots,4$ and set $\tilde{\mu}_{\rm init} = \log(\mu_{\rm init}/\mu_0)$, with
\begin{equation*}
\mu_{\rm init} = \frac{1}{4} \sum_{k = 1}^4 \frac{\chi^{(k)}}{\varphi^{(k)}_0}.
\end{equation*}
According to Algorithm~\ref{alg:kokohoska}, the initial guess for the logarithm of the diffusion is $\tilde{\kappa}_{\rm init} = 0 \in \R^{N}$, which corresponds to the homogeneous $\kappa_{\rm init} = \kappa_0 \mathbf{1} \in \R^{N}$ estimate. However, starting from a homogeneous $\kappa_{\rm init}$ leads to somewhat slow convergence during the first rounds of Algorithm~\ref{alg:kokohoska}. To avoid this, we employ the (already quite reasonable) initial guess $\mu_{\rm init}$ for the absorption to run a single LSQR iteration only for the diffusion.
To be more precise, we include a `zeroth step' in Algorithm \ref{alg:kokohoska} in between the initialisation and step 1:
{\em \begin{enumerate}
\setcounter{enumi}{-1}
\item Set $h = h(\tilde{\kappa}_{\rm init},\tilde{\mu}_{\rm init})$. Form the Jacobian $J = J_{\tilde{\kappa}} (\tilde{\kappa}_{\rm init},\tilde{\mu}_{\rm init})$ (w.r.t. $\tilde{\kappa}$ evaluated at $(\tilde{\kappa}_{\rm init},\tilde{\mu}_{\rm init})$), set $\hat{y} = \Gamma^{-1/2} (\chi - h + J \tilde{\kappa}_{\rm init} )$ and build the matrix $M_\delta (\tilde{\kappa}_{\rm init}) = L^{\rm T} L$. Apply the LSQR algorithm to solve
$$
(L^{-1})^{\rm T} J^{\rm T} (\Gamma^{-1/2})^{\rm T} \Gamma^{-1/2} J L^{-1} \hat{\kappa} \, = \,  (L^{-1})^{\rm T} J^{\rm T} (\Gamma^{-1/2})^{\rm T} \hat{y}, \qquad \tilde{\kappa} = L^{-1} \hat{\kappa},
$$
stating from $\hat{\kappa} = 0$ and terminating when $r_m \leq \tau$. (Re)define the initial guess $\tilde{\kappa}_{\rm init}$ to be the corresponding solution. Reset $\beta^{(0)} = [\tilde{\kappa}_{\rm init}^{\rm T}, \tilde{\mu}_{\rm init}^{\rm T}]^{\rm T}$%, $l=0$
and continue to step~1.
\end{enumerate}}
\noindent
The resulting diffusion estimate $\kappa_{\rm init} = \kappa_0 \exp(\tilde{\kappa}_{\rm init})$ is illustrated in Figure~\ref{fig:diff_phase1}. It is not as accurate as the initial guess for the absorption, but inclusions have already started to form at the correct positions.

\begin{figure}[t!]
\begin{center}
\includegraphics[width=0.45\textwidth]{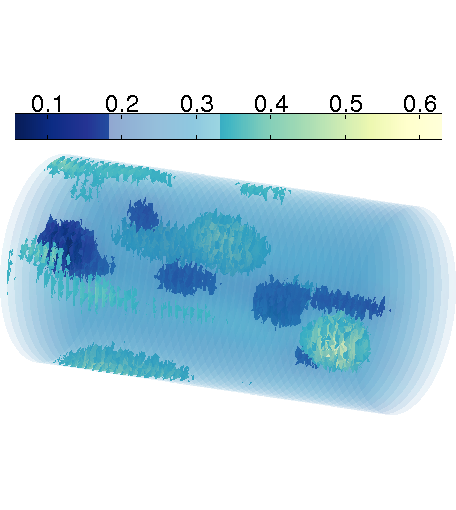} \quad
\includegraphics[width=0.45\textwidth]{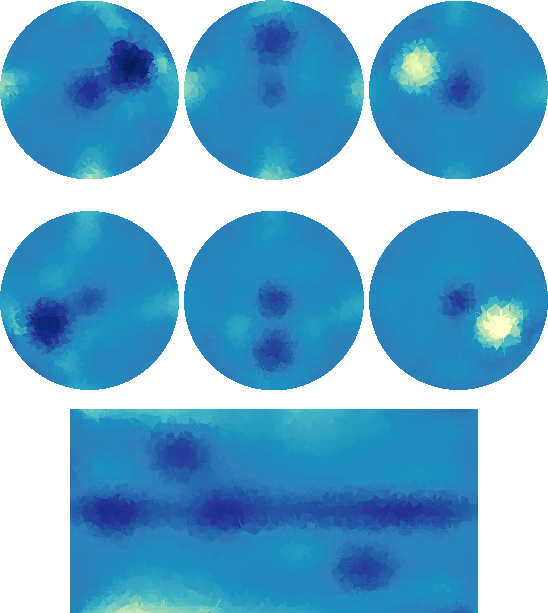} %\\[4mm]
\caption{\label{fig:diff_phase1} 
Test~1. The (refined) initial guess $\kappa_{\rm init}$ for the diffusion.
Left: a 3D-visualisation; the values in the interval $[\kappa_0 \pm 0.075]$ indicated in the colorbar are transparent. Right: slices at the positions $y = -15, -9,-3$ (top row), $y = 3, 9, 15$ (middle row) and $x = 0$ (bottom row).
}
\end{center}
%\vspace{-2mm}
\end{figure}

\begin{figure}[t]
\begin{center}
\includegraphics[width=0.45\textwidth]{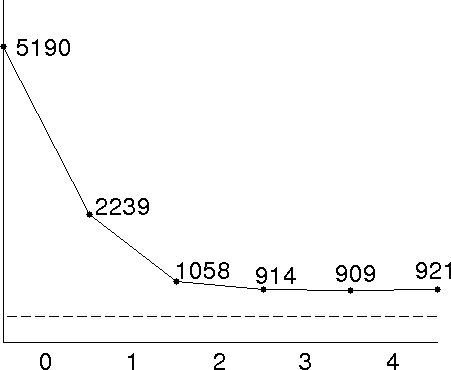} \quad
\includegraphics[width=0.45\textwidth]{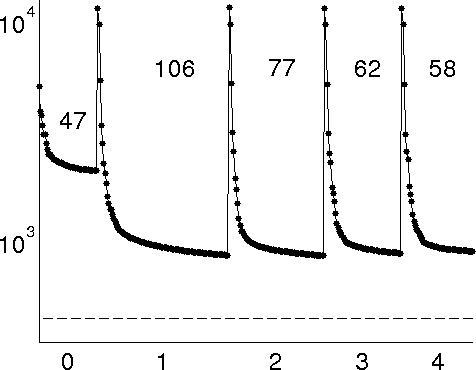} %\\[4mm]
\caption{\label{fig:case1res} 
Test~1. The evolution of the residuals after each iteration step. During step~0, only the diffusion is estimated. The dashed lines indicate the theoretical (whitened) noise level $\sqrt{KN}$.
Left: The nonlinear residuals $| \Gamma^{-1/2} (\chi - h(\beta) ) |$ corresponding to the outer iteration.
Right: The LSQR residuals $| A \beta - \tilde{y} |$ corresponding to the inner iteration on a logarithmic scale.
}
\end{center}
%\vspace{-2mm}
\end{figure}

\begin{figure}[t]
\begin{center}
\includegraphics[width=0.45\textwidth]{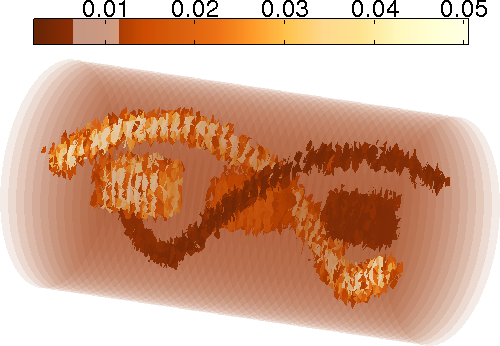} \quad
\includegraphics[width=0.45\textwidth]{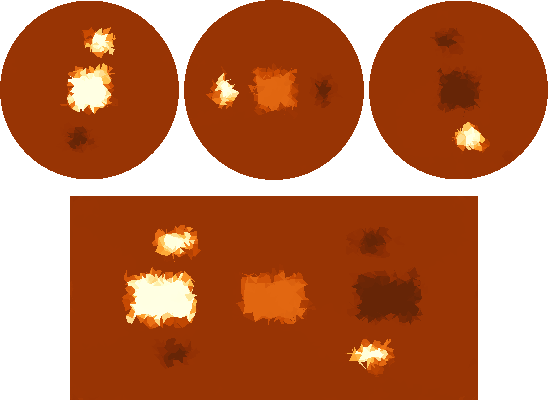} \\[4mm]
\includegraphics[width=0.45\textwidth]{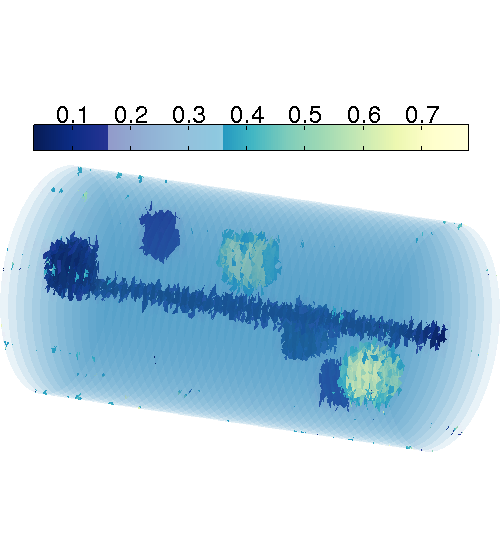} \quad
\includegraphics[width=0.45\textwidth]{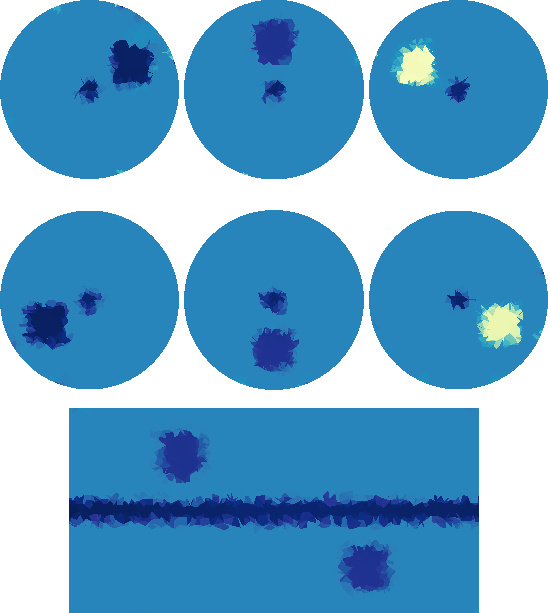}
\caption{\label{fig:case1rec}
Test~1. The reconstructions of the absorption (above) and the diffusion (below). Left: 3D-visualisations; the values in the intervals $[\mu_0 \pm 0.0025]$ and $[\kappa_0 \pm 0.1]$ indicated in the colorbars are transparent in the respective images. Right: slices of the reconstructed absorption at the positions $y = -11, 0, 11$ and $x = 0$ and of the reconstructed diffusion at the positions $y = -15, -9,-3, 3, 9, 15$ and $x = 0$.
}
\end{center}
%\vspace{-2mm}
\end{figure}

The algorithm then proceeds in the standard way, i.e., with the simultaneous reconstruction of the absorption and the diffusion; see steps 1--4 in Algorithm~\ref{alg:kokohoska}.
The residuals for both the outer and the inner iteration are depicted in Figure \ref{fig:case1res}. The algorithm terminates after
five linearisations (including step~0), meaning that the reconstruction, presented in Figure~\ref{fig:case1rec}, is a result of four linearisations of the measurement model (cf.~step~4 of Algorithm~\ref{alg:kokohoska}).
The reconstructions of both optical parameters are in accordance with the prior information: there are well localised inclusions in an approximately constant background. Moreover, the inclusions lie at  approximately correct positions. On the negative side, the reconstruction of the diffusion exhibits some instability near the object boundary, which causes very high jumps at a few isolated nodes: the highest and the lowest point values are $\max (\kappa) = 8.47$ and $\min (\kappa) = 0.0086$, respectively. However, discarding the nodes at the boundary, one gets the reasonable extremal values $\max (\kappa |_{\Omega \setminus \partial \Omega}) = 0.78$ and $\min (\kappa |_{\Omega \setminus \partial \Omega}) = 0.03$ (which are used as the limits for the diffusion plot in Figure~\ref{fig:case1rec}). In addition, in spite of the aforementioned outliers in the reconstructed diffusion, the mean values over the background and the inclusions presented in Table \ref{tab:vals} are approximately correct for both parameters. %Notice also that 
Even though the reconstructed means are not exactly the same as the target values, they are very close to the corresponding mean values of the interpolated parameters $P\mu_{\rm target}$ and $P\kappa_{\rm target}$ (except for the `most difficult' diffusive inclusion (1) lying along the axis of $\Omega$), which is arguably the best one can expect to achieve.

As mentioned after Algorithm~\ref{alg:kokohoska}, the reconstructions seem insensitive to the choice of the threshold parameter $T>0$. Considering significantly higher noise levels (say, 10\%) clearly deteriorates the quality of the reconstructions, with the diffusion coefficient exhibiting a higher level of instability. However, e.g., doubling the noise level does not considerably alter the performance of the algorithm. Moderate changes in the density of the reconstruction mesh mainly affect the reconstruction process via the computation time.

The running time of Algorithm~\ref{alg:kokohoska} for this experiment was approximately $12$ minutes with a MATLAB (2014a) implementation on a laptop with $16$ GB RAM and an Intel Core i7-4600U CPU having clock speed $2.10$ GHz.

\begin{table}
\begin{center}
\caption{\label{tab:vals} 
Test~1. The mean values of the absorption and the diffusion in the target illustrated in Figure \ref{fig:target}, in the target interpolated onto the sparser grid, and in the reconstruction shown in Figure \ref{fig:case1rec}. The mean values are taken over the correct supports of the inclusions listed in Table \ref{tab:target}.
}
\begin{tabular}{l@{\hspace{3mm}}lll|l@{\hspace{3mm}}lll}
Absorption	& \multicolumn{3}{l|}{mean values ($\text{mm}^{-1}$)} & Diffusion & \multicolumn{3}{l}{mean values ($\text{mm}$)} \\
	& $\mu_{\rm target}$ & $P\mu_{\rm target}$ & $\mu_{\rm rec}$  &
	& $\kappa_{\rm target}$ & $P\kappa_{\rm target}$ & $\kappa_{\rm rec}$  \\ \hline
bg	& 0.01 & 0.0101 & 0.00996	& bg  & 0.3 & 0.300 & 0.304 \\
(1)	& 0.05 & 0.0459	& 0.0456		& (1) & 0.05 & 0.0874 & 0.0708 \\
(2)	& 0.02 & 0.0189 & 0.0188		& (2) & 0.05 & 0.0668 & 0.0628 \\
(3)	& 0.002 & 0.00266 & 0.00263	& (3) & 0.15 & 0.163 & 0.165 \\
(4)	& 0.05 & 0.0432 & 0.0425		& (4) & 0.6 & 0.579 & 0.595 \\
(5) & 0.002 & 0.00338 & 0.00335	& (5) & 0.05 & 0.0792 & 0.0764 \\
	& & &						& (6) & 0.15 & 0.169 & 0.174 \\
	& & &						& (7) & 0.6 & 0.573 & 0.576
\end{tabular}
\end{center}
\end{table}

\subsection*{Test 2} In practical applications it is not always possible to illuminate the target from several different directions.
We next study the effect of the number and position of illuminations on the cubical target of size $11^3\, {\rm mm}^3$ visualised in Figure~\ref{subfig:test2target}. 
The absorption is composed of a homogeneous background $\mu_{\rm bg} = 0.015 \, {\rm mm}^{-1}$ with two embedded inhomogeneities: a cross-shaped inclusion lying along the plane $z = x$ with absorption $0.01 \, {\rm mm}^{-1}$ and an origin-centered spherical shell with outer radius $5\, {\rm mm}$, inner radius $4 \, {\rm mm}$ and the absorption level $0.02 \, {\rm mm}^{-1}$ (except at the intersection with the cross). The background diffusion level is $\kappa_{\rm bg} = 0.3 \, {\rm mm}$ and there are also two diffusive inhomogeneities: a cross-shaped inclusion lying along the plane $z = -x$ with diffusion $0.4 \, {\rm mm}$ and a ball centered at the origin with radius $3 \, {\rm mm}$ and diffusion $0.2 \, {\rm mm}$ (except at the intersection with the cross). 

To begin with, we illuminate the object through its bottom face with only one photon flux $\Phi = \Phi^{\rm btm}: \partial \Omega \to \R$ that is modeled as the characteristic function
\begin{equation*}
\Phi^{\rm btm}(x) =
\left\{ \begin{array}{ll}
1 & \text{if} \ x \in \partial \Omega_\text{btm}, \\[1mm]
0 & \text{otherwise},
\end{array} \right.
\end{equation*}
where $\partial \Omega_\text{btm} = \{(x,y,z) \in \R^3 : \vert x \vert \leq 5.5, \vert y \vert \leq 5.5, z = -5.5\}$. The simulation of the data follows the same steps as in the previous example; in particular, the level of additive noise is still one percent. (In this case, the fine FE mesh has $N_f = 133649$ nodes and $752914$ tetrahedrons whereas the coarse one has $N = 54721$ nodes and $295176$ tetrahedrons.)

\begin{figure}[t!]
\begin{center}
\begin{subfigure}[b]{0.41\textwidth}
\includegraphics[width=0.41\textwidth]{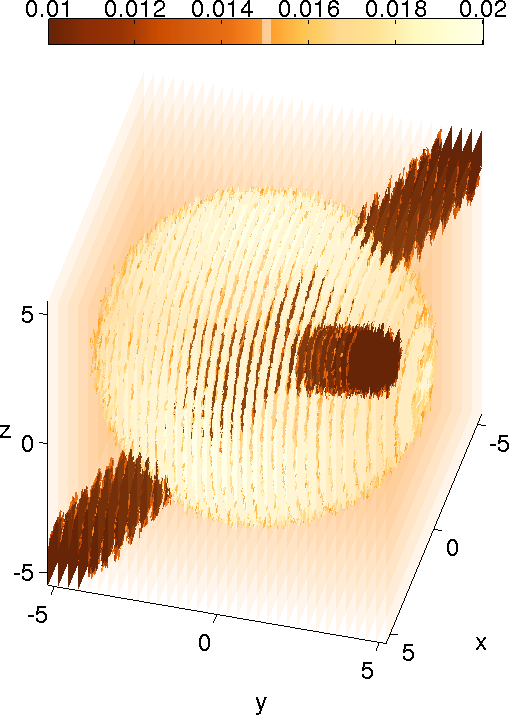} \quad
\includegraphics[width=0.41\textwidth]{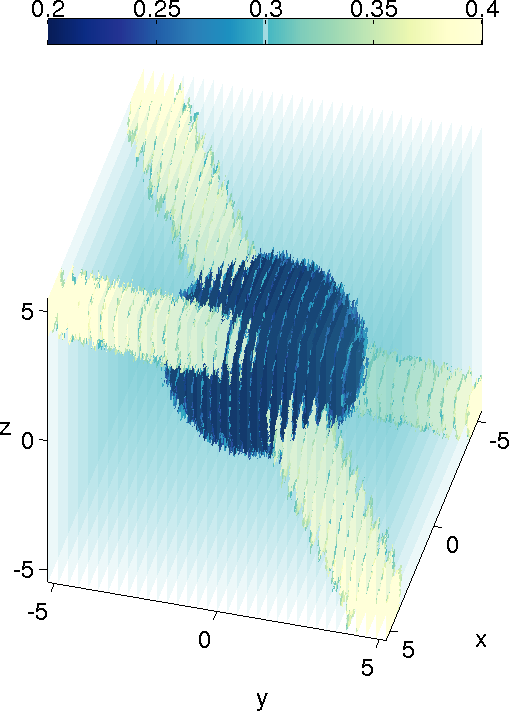} \\
\includegraphics[width=0.41\textwidth]{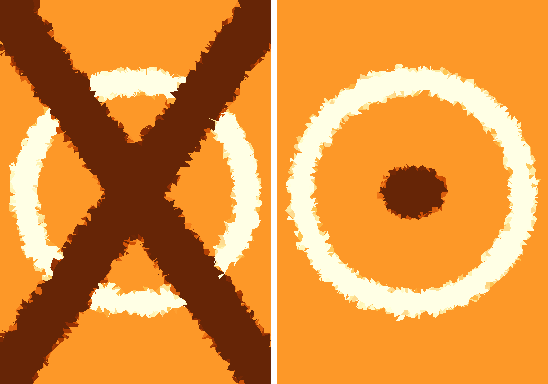} \quad
\includegraphics[width=0.41\textwidth]{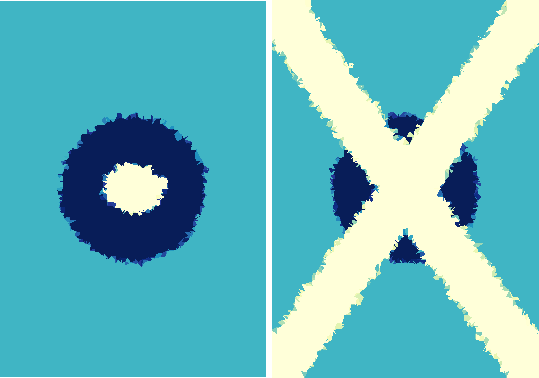}
\caption{\label{subfig:test2target} The target absorption (left) and diffusion (right).}
\end{subfigure} \qquad
\begin{subfigure}[b]{0.41\textwidth}
\includegraphics[width=0.41\textwidth]{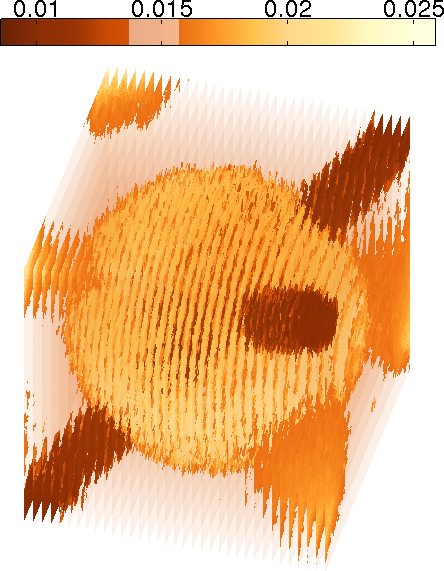} \quad
\includegraphics[width=0.41\textwidth]{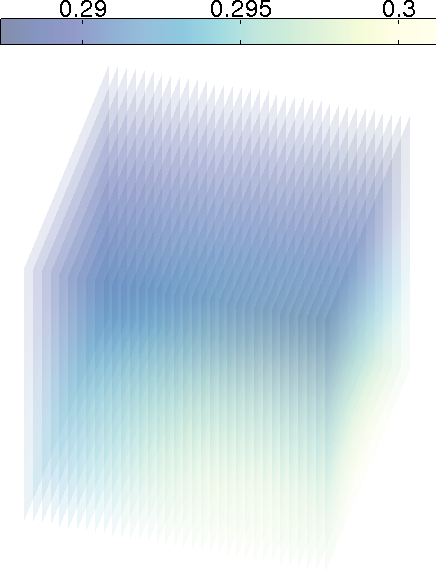} \\
\includegraphics[width=0.41\textwidth]{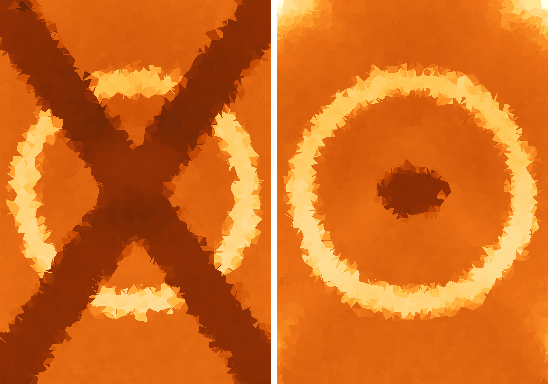} \quad
\includegraphics[width=0.41\textwidth]{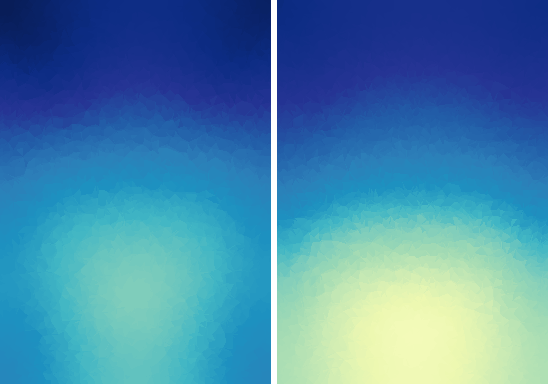}
\caption{\label{subfig:test2a} The reconstruction corresponding to the illumination $\Phi^{\rm btm}$.}
\end{subfigure} \\[5mm]
\begin{subfigure}[b]{0.41\textwidth}
\includegraphics[width=0.41\textwidth]{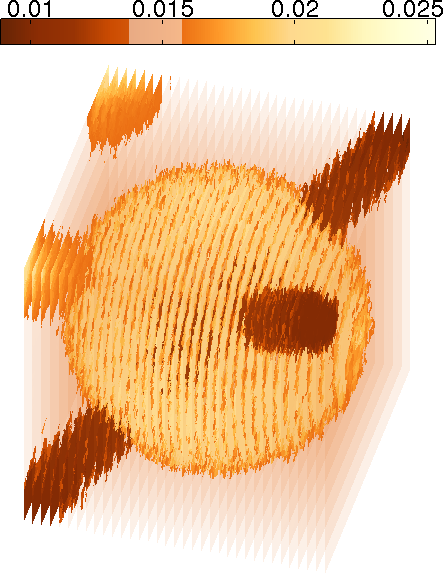} \quad
\includegraphics[width=0.41\textwidth]{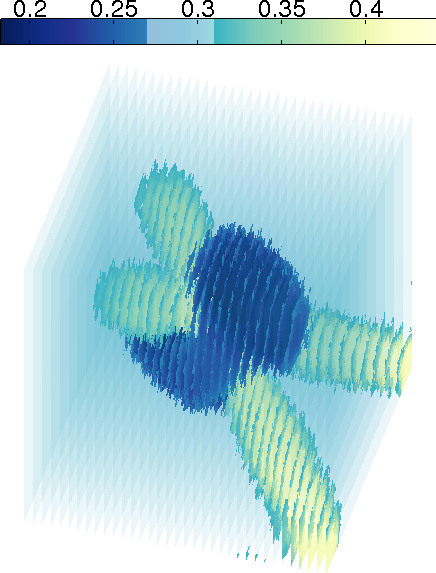} \\
\includegraphics[width=0.41\textwidth]{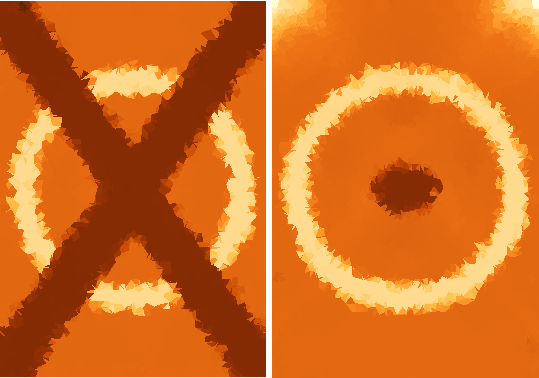} \quad
\includegraphics[width=0.41\textwidth]{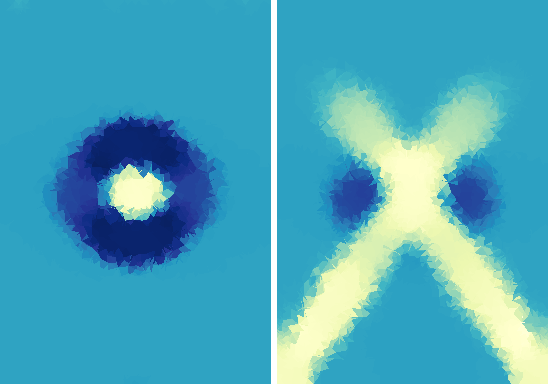}
\caption{\label{subfig:test2b} The reconstruction corresponding to the illuminations $\Phi^{\rm btm}$ and $\Phi^{\rm rgt}$.}
\end{subfigure} \qquad
\begin{subfigure}[b]{0.41\textwidth}
\includegraphics[width=0.41\textwidth]{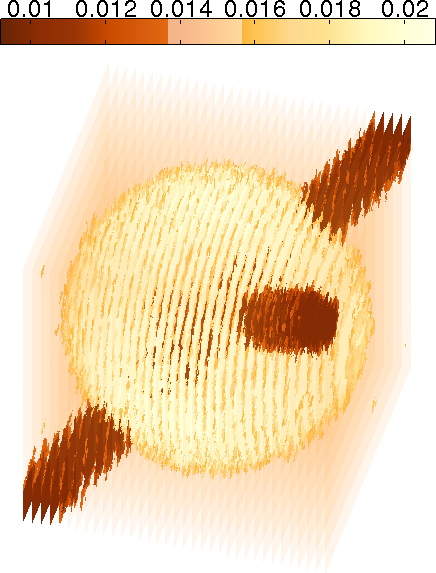} \quad
\includegraphics[width=0.41\textwidth]{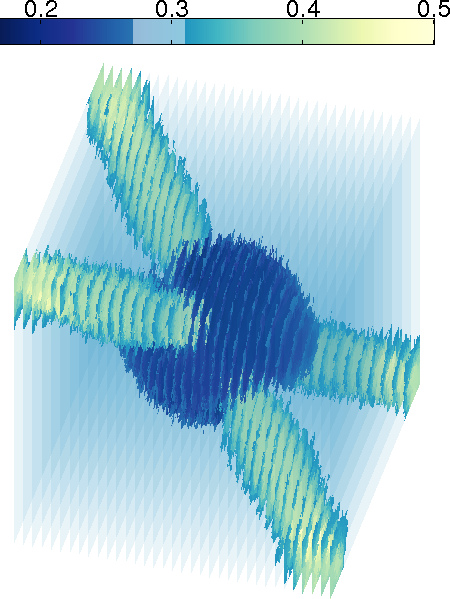} \\
\includegraphics[width=0.41\textwidth]{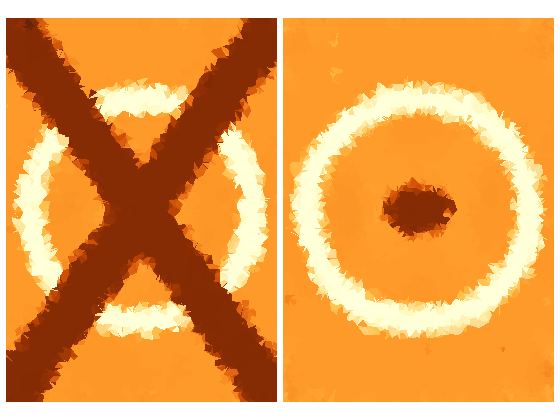} \quad
\includegraphics[width=0.41\textwidth]{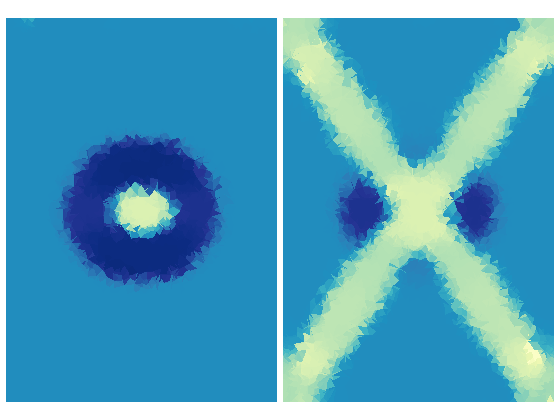}
\caption{\label{subfig:test2c} The reconstruction corresponding to the illuminations $\Phi^{\rm btm}$ and $\Phi^{\rm top}$.}
\end{subfigure}  \\[5mm]
\begin{subfigure}[b]{0.41\textwidth}
\includegraphics[width=0.41\textwidth]{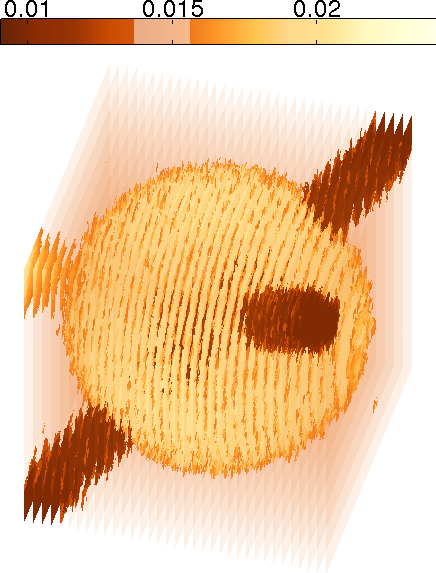} \quad
\includegraphics[width=0.41\textwidth]{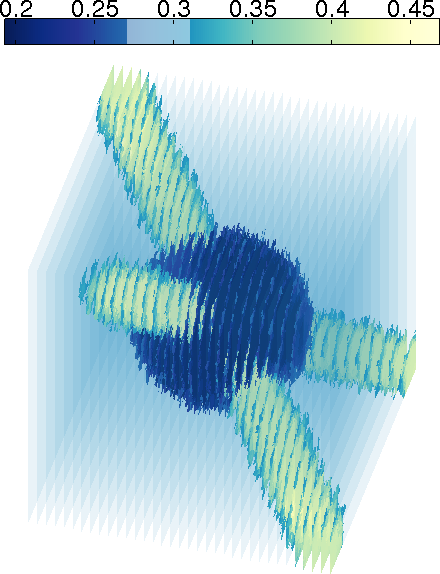} \\
\includegraphics[width=0.41\textwidth]{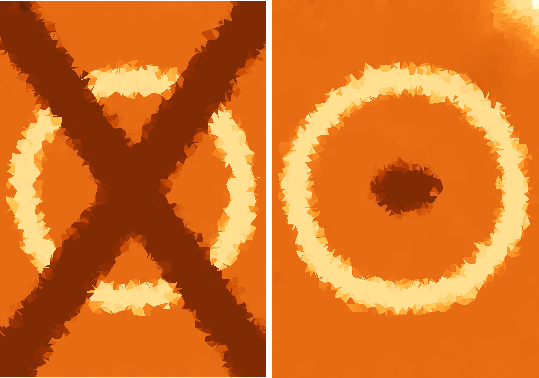} \quad
\includegraphics[width=0.41\textwidth]{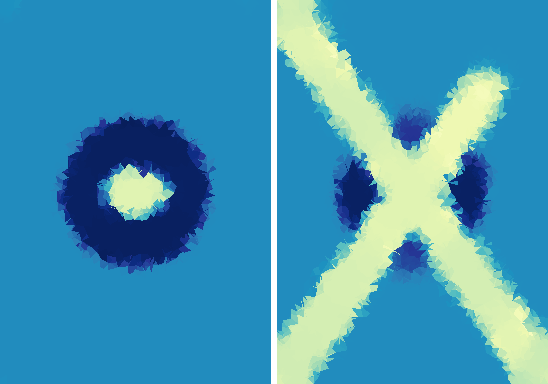}
\caption{\label{subfig:test2d} The reconstruction corresponding to the illuminations $\Phi^{\rm btm}$, $\Phi^{\rm rgt}$ and $\Phi^{\rm bck}$.}
\end{subfigure} \qquad
\begin{subfigure}[b]{0.41\textwidth}
\includegraphics[width=0.41\textwidth]{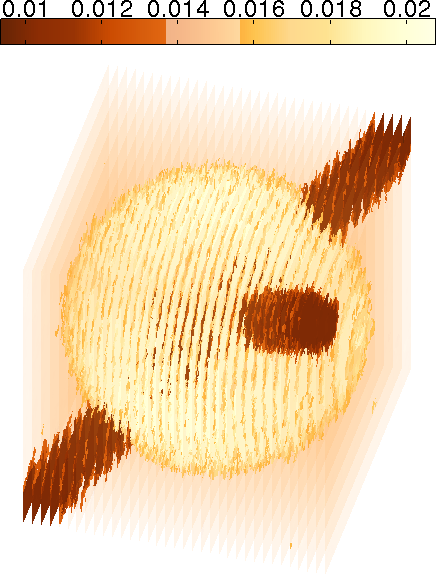} \quad
\includegraphics[width=0.41\textwidth]{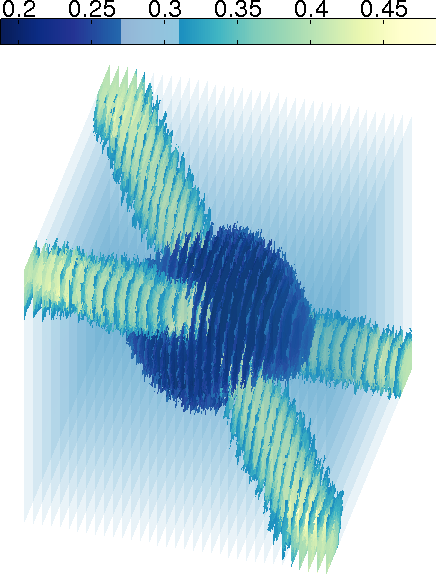} \\
\includegraphics[width=0.41\textwidth]{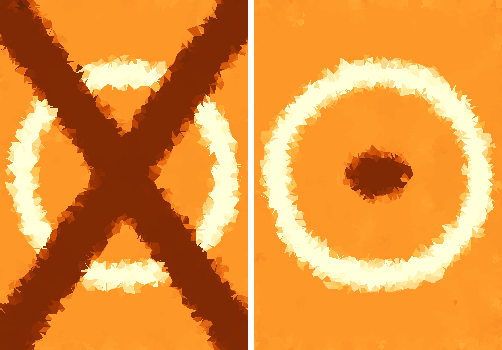} \quad
\includegraphics[width=0.41\textwidth]{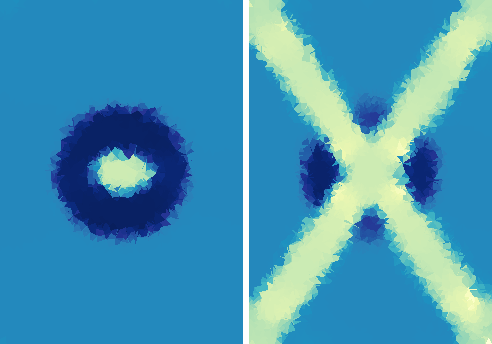}
\caption{\label{subfig:test2e} The reconstruction corresponding to the illuminations $\Phi^{\rm btm}$, $\Phi^{\rm top}$ and $\Phi^{\rm bck}$.}
\end{subfigure}
\vspace{3mm}
\caption{\label{fig:test2} Test~2. In the 3D-visualisations the values in the intervals 
indicated in the colorbars are transparent in the respective images. The slices are taken for both parameters along the planes $z = x$ (left) and $z = -x$ (right).}
\end{center}
\end{figure}

Using the same free parameters and forming the initial guesses $\tilde{\kappa}_{\rm init}$ and $\tilde{\mu}_{\rm init}$ as in the first example, we get the approximate background levels $\kappa_0 = 0.29$ and $\mu_0 = 0.015$ as well as the final reconstruction presented in Figure~\ref{subfig:test2a}.
The reconstruction of the absorption is reasonable, although it includes shadows of the diffusive inclusions. On the other hand, the diffusion produced by Algorithm~\ref{alg:kokohoska} is almost constant,
making it practically useless.  This reconstruction, resulting in a (nonlinear) residual clearly under the noise level $\sqrt{N}$, was obtained after only two steps of LSQR already in step~0 of the algorithm. The reconstructions one gets by starting with $\tilde{\kappa}_{\rm init} = 0$, i.e., without step~0, or even by setting $\tilde{\mu}_{\rm init} = 0$ are essentially the same. In addition, changing the direction of the illumination does not seem to affect the results. We conclude that when using only one illumination, the absorption can explain the measurement (almost) completely, and hence one cannot simultaneously reconstruct the diffusion. 
This is not very surprising as it is well known that the optical inverse problem of QPAT is nonunique for one illumination; see~\cite{bal2011a, cox2009, naetar2014, pulkkinen2015, shao2011}. However, to add an extra twist, \cite{Beretta15} recently presented a uniqueness result for {\em piecewise constant} absorption and diffusion with only one illumination.

Let us then complement the measurement corresponding to the flux $\Phi^{\rm btm}$ with a second one. To evaluate the effect of the chosen illumination directions, we consider two cases: illuminations through adjacent faces and through opposite sides. To study the first case, we combine $\Phi^{\rm btm}$ with $\Phi^{\rm rgt}$,~i.e.,~a homogeneous flux through the right-hand face of the cube. In the second case, $\Phi^{\rm btm}$ is accompanied by $\Phi^{\rm top}$,~i.e.,~a homogeneous flux through the top facet.  Both $\Phi^{\rm rgt}$ and $\Phi^{\rm top}: \partial \Omega \to \R$ are modelled by the appropriate characteristic functions on $\partial \Omega$.
The reconstruction algorithm is run for the two cases with the same free parameters as previously. As a result, %in both cases, 
we get the same approximate background values as with one illumination, but the final reconstructions, presented in Figures~\ref{subfig:test2b}--\ref{subfig:test2c},
are significantly better: two illuminations result in decent reconstructions of both optical parameters. 
It has to be noted, however, that the directions of the two employed photon fluxes play a significant role: for the illuminations through adjacent faces, the reconstructed diffusion is inaccurate close to the opposite edge, whereas the opposite illuminations lead to good quality reconstructions of both parameters in the whole domain. 

Finally, we add a third measurement to the two-illumination settings considered above. The additional measurement is induced by a homogeneous photon flux through the back of the cube, with its amplitude $\Phi^{\rm bck}: \partial \Omega \to \R$ modelled once again by the appropriate characteristic function. 
The triplet $\Phi^{\rm btm}$, $\Phi^{\rm rgt}$ and $\Phi^{\rm bck}$ corresponds to illuminations around one corner of the cube, while the facet-wise fluxes $\Phi^{\rm btm}$, $\Phi^{\rm top}$ and $\Phi^{\rm bck}$ for the other three-illumination test cover the boundary of the cube as evenly as possible. The resulting reconstructions are presented in Figures~\ref{subfig:test2d}--\ref{subfig:test2e}. Compared to the two-illumination cases, the reconstructions of the diffusion are now somewhat sharper. However, as with two adjacent illuminations, when the three photon fluxes are supported around one corner, the reconstruction of the diffusion close to the opposite corner is far from satisfactory. For three illuminations, there should be no problem with the (theoretical) uniqueness, so it seems that the physics of the measurement setting limits the reconstruction quality: the light does not penetrate deep enough into the object in order to provide reliable information about the diffusion near the opposite corner, where the reconstruction mainly reflects the prior information.  On the other hand, complementing the opposite illuminations with a third measurement leads to slightly more detailed reconstructions, but the difference between the two- and three-illumination cases is not substantial (cf.~Figures~\ref{subfig:test2c} and \ref{subfig:test2e}).

The reconstructions corresponding to two illuminations shown in Figures~\ref{subfig:test2b}--\ref{subfig:test2c}
resulted from three linearisations of the measurement model in Algorithm~\ref{alg:kokohoska} (including step~0), while it took four linearisations to produce the three-illumination reconstructions in Figures~\ref{subfig:test2d}--\ref{subfig:test2e}. The corresponding computation times were about five and ten minutes, respectively, with the same hardware as in Test~1.

\section{Concluding remarks}
\label{sec:conclusion}

In this work, the inverse problem of QPAT was investigated.
The aim of QPAT is to produce high-resolution reconstructions of the optical parameters of interest from given three-dimensional, high-resolution PAT images.  
A computationally efficient algorithm for the inverse problem of QPAT was introduced by combining priorconditioned LSQR, a lagged diffusivity step and linearisations of the measurement model in two nested iterations. To facilitate handling a high number of measurements and unknowns, all multiplications by Jacobians in the algorithm were implemented in a matrix-free manner.
The numerical studies exclusively employed a Perona--Malik prior, although other types of edge-preserving priors can as easily be utilised in the described approach.
The proposed reconstruction algorithm was tested with 3D numerical simulations.
The results demonstrate that the approach is capable of producing accurate and good quality estimates of the absorption and diffusion in complex 3D geometries in a reasonable computation time, even when the tests are run on a standard laptop. 
This suggests that QPAT can be developed into a practical method without compromising the accuracy of the estimates or the good resolution the method can potentially provide.

\section*{Appendix: Matrix-free approximation of the Jacobians}

The aim of this appendix is explaining how to efficiently approximate multiplications by the Jacobian $J^h_{\tilde{\kappa}, \tilde{\mu}}$ of the map 
$$
\R^{2N} \ni (\tilde{\kappa}, \tilde{\mu}) \mapsto h\big(\kappa(\tilde{\kappa}), \mu(\tilde{\mu})\big) \in \R^N,
$$ 
where $h$ is the discretisation of the measurement $H(\kappa(\tilde{\kappa}), \mu(\tilde{\mu})) = \mu(\tilde{\mu}) \varphi\big(\kappa(\tilde{\kappa}),\mu(\tilde{\mu})\big)$, cf.~\eqref{projH}, and \begin{equation*}
\kappa(\tilde{\kappa}) = \kappa_0 \exp (\tilde{\kappa}) \qquad {\rm and} \qquad \mu(\tilde{\mu}) = \mu_0 \exp (\tilde{\mu})
\end{equation*}
are the elementwise transformations of the parameters introduced in \eqref{exponential}.
As in Sections~\ref{sec:Bayes} and \ref{sec:algo}, we only consider here the case of one illumination, which corresponds to the photon flux~$\Phi$, cf.~\eqref{fwmod}. In the case of multiple illuminations, more bookkeeping of the indices is required, 
but otherwise the described methodology works as for a single flux of photons.

First of all, since 
$\kappa_n = \kappa(\tilde{\kappa}_n)$ and $\mu_n = \mu(\tilde{\mu}_n)$ for all $n = 1,\ldots,N$, 
we get the Jacobian $J^h_{\tilde{\kappa}, \tilde{\mu}} = [ J^{h}_{\tilde{\kappa}} , J^{h}_{\tilde{\mu}} ] \in \R^{N \times 2N}$ by the chain rule
\begin{align*}
J^{h}_{\tilde{\kappa}} &= \left[ \frac{\partial h_i}{\partial \tilde{\kappa}_j} \right]_{i,j=1}^N = \left[ \frac{\partial h_i}{\partial \kappa_j} \frac{d \kappa_j}{d \tilde{\kappa}_j} \right]_{i,j=1}^N = \left[ \frac{\partial h_i}{\partial \kappa_j} \kappa_j \right]_{i,j=1}^N = J^{h}_{\kappa} \diag (\kappa), \\
J^{h}_{\tilde{\mu}} &= \left[ \frac{\partial h_i}{\partial \tilde{\mu}_j} \right]_{i,j=1}^N = \left[ \frac{\partial h_i}{\partial \mu_j} \frac{d \mu_j}{d \tilde{\mu}_j} \right]_{i,j=1}^N = \left[ \frac{\partial h_i}{\partial \mu_j} \mu_j \right]_{i,j=1}^N = J^{h}_{\mu} \diag (\mu).
\end{align*}
In particular, operating with $J^h_{\tilde{\kappa}, \tilde{\mu}}$ boils, in essence, down to multiplying with $J^h_{\kappa, \mu} = [ J^{h}_{\kappa}, J^{h}_{\mu} ]$.

According to Lemma \ref{derivative}, the Fr\'echet derivative of $H$ is
\begin{align*}
\big(H'(\kappa,\mu) \big) (\vartheta, \theta) = \mu \varphi' + \theta \varphi,
\end{align*}
where $\varphi(\kappa,\mu)$ is the solution of \eqref{fwmod} and $(\varphi'(\kappa,\mu))(\vartheta, \theta)$ 
is the solution of the variational problem \eqref{varphider}.
To approximate $J^h_{\kappa, \mu}$ at the optical parameters $\kappa(\tilde{\kappa})$ and $\mu(\tilde{\mu})$ corresponding to given $\tilde{\kappa}$ and $\tilde{\mu}$, 
we use the FEM with the piecewise linear basis $\{ \phi_n \}_{n=1}^N$ appearing in \eqref{discr_params} to evaluate $H'(\kappa,\mu)$, 
letting the perturbations $\vartheta$ and $\theta$ run in turns through $\{ \phi_n \}_{n=1}^N$. To be more precise, denoting the node values of the FEM approximations for $\varphi(\kappa, \mu)$, $(\varphi'(\kappa, \mu))(\vartheta,\theta)$ and $\theta$ by  $\{ \varphi_n \}$, $\{ \varphi'_n(\vartheta,\theta) \}$ and $\{ \theta_n \}$, respectively, we approximate
$$
\big(H'(\kappa,\mu) \big) (\vartheta, \theta) \approx 
\sum_{n=1}^N \big(h'_n(\kappa, \mu)\big)(\vartheta,\theta) \, \phi_n,
$$
where
\begin{align*}
\left( h_n'(\kappa,\mu) \right) (\vartheta, \theta) = \mu_n \varphi_n'(\vartheta,\theta) + \theta_n \varphi_n, \qquad n=1,\dots,N.
\end{align*}
Accordingly, the (approximate) Jacobian $J^h_{\kappa, \mu} = [ J^{h}_{\kappa} , J^{h}_{\mu} ]$ is formed as
\begin{align*}
J^{h}_{\kappa} &= \big[\big( h_i'(\kappa,\mu) \big) (\phi_j, 0)\big]_{i,j = 1}^N = \diag (\mu) \, J^{\varphi}_{\kappa}, \\
J^{h}_{\mu} &= \big[\big( h_i'(\kappa,\mu) \big) (0,\phi_j)\big]_{i,j = 1}^N = \diag (\mu) \, J^{\varphi}_{\mu} + \diag (\varphi).
\end{align*}
We still need to consider how to handle multiplications by $J^{\varphi}_{\kappa}$ and $J^{\varphi}_{\mu}$ matrix-freely.

Based on the weak formulation of \eqref{fwmod},
\begin{align*}
\int_{\Omega}\left( \kappa \nabla \varphi \cdot \nabla v + \mu \varphi v \right) dx  +
{\displaystyle \frac{1}{2}} \int_{\partial \Omega} \varphi v \, d S \nonumber = 2 \int_{\partial \Omega} \Phi v \, d S \qquad \text{for  all} \ v \in H^1(\Omega),
\end{align*}
we first form the FEM system matrix $K = K(\kappa,\mu) \in \R^{N \times N}$ and load vector $f = f(\Phi) \in \R^N$ corresponding to the basis $\{\phi_n\}_{n=1}^N$, and solve the node values $\varphi \in \R^N$ from the equation
\begin{equation*}
K \varphi = f. 
\end{equation*}
Next, 
we approximate the derivative $\left( \varphi'(\kappa,\mu) \right) (\vartheta, \theta)$ based on the variational problem \eqref{varphider}.
Since the left-hand side of \eqref{varphider} is identical to that in the weak formulation of \eqref{fwmod}, the FEM system matrix $K \in \R^{N \times N}$ stays the same. Letting the perturbations on the right-hand side of \eqref{varphider} run in turns through $\{ \phi_n \}_{n=1}^N$,
we get the load matrix $G(\varphi) \in \R^{N \times 2N} $ and the equation
\begin{align*}
K J^{\varphi}_{\kappa,\mu} = G \qquad {\rm or}
\qquad K \big[ J^{\varphi}_{\kappa}, \, J^{\varphi}_{\mu} \big] =
\big[ G^{(1)}, \, G^{(2)} \big],
\end{align*}
where
\begin{align*}
G^{(1)}_{i,j} = - \int_{\Omega} \phi_j \, \nabla \varphi \cdot \nabla \phi_i \, dx, \qquad
G^{(2)}_{i,j} = - \int_{\Omega} \phi_j \varphi \phi_i \, d x,
\end{align*}
for $i,j = 1,\ldots, N$.

Finally, instead of solving the huge system to form the Jacobian explicitly, we use the matrices $K$ and $G$ to implicitly operate 
on a given vector: 
multiplying $s = [(s^{(1)})^{\rm T} , (s^{(2)})^{\rm T}]^{\rm T} \in \R^{2N}$ by $J^h_{\tilde{\kappa}, \tilde{\mu}} \in \R^{N \times 2N}$ gives
\begin{align*}
J^h_{\tilde{\kappa}, \tilde{\mu}} s
&= J^{h}_{\tilde{\kappa}} s^{(1)} + J^{h}_{\tilde{\mu}} s^{(2)} \\
&= \diag (\mu) K^{-1}G^{(1)} \diag(\kappa ) s^{(1)} \\
& \quad + \left( \diag (\mu ) K^{-1}G^{(2)} + \diag (\varphi ) \right) \diag(\mu) s^{(2)} \\
&= \diag (\mu) K^{-1} \left( G^{(1)} \diag(\kappa) s^{(1)} + G^{(2)} \diag(\mu) s^{(2)} \right) \\
& \quad + \diag (\varphi) \diag (\mu) s^{(2)}.
\end{align*}
We also need to be able to multiply a given vector $t \in \R^{N}$ with the transpose $(J^h_{\tilde{\kappa}, \tilde{\mu}})^{\rm T} \in \R^{2N \times N}$. Since matrices $K$ and $G^{(2)}$ are symmetric, we get
\begin{align*}
J^{h}_{\tilde{\kappa}, \tilde{\mu}}{}^{\rm T} t
&= \left[ \begin{array}{c}
\!\! J^{h \rm T}_{\tilde{\kappa}}t \!\! \\[1mm]
\!\! J^{h \rm T}_{\tilde{\mu}}t \!\!
\end{array} \right]
= \left[ \begin{array}{c}
\!\! \diag(\kappa) (G^{(1)})^{\rm T} K^{-1} \diag (\mu) t \!\! \\[1mm]
\!\! \diag(\mu) \left( G^{(2)} K^{-1} \diag (\mu) + \diag (\varphi) \right) t \!\!
\end{array} \right] \\
&= \left[ \begin{array}{c c}
\!\! \, \diag (\kappa) & \mathrm{0} \!\! \\[1mm]
\!\! \mathrm{0} & \diag (\mu) \!\!
\end{array} \right]
\left( \left[ \begin{array}{c}
\!\! (G^{(1)})^{\rm T} \!\! \\[1mm]
\!\! G^{(2)} \!\!
\end{array} \right] K^{-1} \diag (\mu) t
+ \left[ \begin{array}{c}
\!\! \mathrm{0} \!\! \\[1mm]
\!\! \diag (\varphi) t \!\!
\end{array} \right] \right).
\end{align*}

Numerically speaking, multiplying a vector by the Jacobian $J^h_{\tilde{\kappa}, \tilde{\mu}}$ or its transpose is relatively cheap: one only needs to perform elementwise multiplications of vectors and matrix-vector multiplications with sparse matrices, and to operate with the inverse of the FEM system matrix $K$ on a vector. However, when the number of degrees of freedom and/or illuminations is very high, the sizes of the matrices $G^{(1)}$ and $G^{(2)}$ may become impractically large. In this case, one can continue the analysis to avoid forming the matrices $G^{(1)}$ and $G^{(2)}$ altogether, but the details are omitted here for brevity.

% Bibliography using bibtex %%%%%%%%%%%%%%%%%%%
\bibliographystyle{acm}
\bibliography{M105173}

\end{document}